\newcommand{\R}{\mathbb{R}}
\newcommand{\supp}{\text{supp}}
\newcommand{\spc}{\text{ }}
\newcommand{\x}{\textbf{x}}
\newcommand{\eoproof}{\hspace*{\fill} $\square$ \vspace{5pt}}
\begin{document}



\title{Improved Linear Programs for Discrete Barycenters}

\author{Steffen Borgwardt\inst{1} \and Stephan Patterson\inst{2}}

\institute{\email{\href{mailto:steffen.borgwardt@ucdenver.edu}{steffen.borgwardt@ucdenver.edu}};
University of Colorado Denver 
\and \email{\href{mailto:stephan.patterson@ucdenver.edu}{stephan.patterson@ucdenver.edu}}; University of Colorado Denver
}

\date{}

\maketitle

\begin{abstract} Discrete barycenters are the optimal solutions to mass transport problems for a set of discrete measures. Such transport problems arise in many applications of operations research and statistics. The best known algorithms for exact barycenters are based on linear programming, but these programs scale exponentially in the number of measures, making them prohibitive for practical purposes.

In this paper, we improve on these algorithms. First, by using the optimality conditions to restrict the search space, we provide a reduced linear program that contains dramatically fewer variables compared to previous formulations. Second, we recall a proof from the literature, which lends itself to a linear program that has not been considered for computations. We show that this second formulation is the best model for data in general position. Third, we combine the two programs into a single hybrid model that retains the best properties of both formulations for partially structured data. 

We study these models through an analysis of their scaling in size, the hardness of the required preprocessing, and computational experiments. In doing so, we show that each of the improved linear programs becomes the best model for different types of data.
\end{abstract}

\noindent{\bf Keywords:} discrete barycenter, optimal transport, linear programming\\
\noindent{\bf MSC 2010:} 90B80, 90C05, 90C46, 90C90

\section{Introduction}\label{sec:Intro}

Applications for optimization of mass transport for multiple marginals arise in a variety of fields: probabilistic Fr\'echet means in statistics \cite{mtbmmh-15,zp-17}, team matching in game theory \cite{ce-10,coo-15,p-14}, option prices and price equilibria in economics \cite{bhp-13,cmn-10}, and electron correlations in matter physics \cite{cfk-13}, to name a few. 
The variety of applications has led to considerable activity on the topic; a search for `optimal mass transport problems for several marginals' on GoogleScholar returns about 19,600 hits, including a seminal book by Villani \cite{v-09}, which has over 2,800 citations at the time of this writing. A search for `weighted Wasserstein barycenters' returns about 940 results.

A particularly noteworthy application is the design of deformable templates in manufacturing and image processing \cite{cd-14,jzd-98,ty-05}. An intuitive variant is in metal shaping: A number of sheets of metal have to be pressed (deformed) into multiple, different shapes (deformations). Each shape is modeled as a continuous probability distribution by comparing it to a flat sheet of metal. Physically, a {\em mean deformation} is a best shape for the initial sheet of metal, i.e., a shape that requires minimal energy to mold into all possible deformations. The mean deformation itself is represented as another continuous probability distribution, and the search for it can be formulated as an optimization problem to minimize the total energy cost required to obtain all desired deformations. Formal definitions of the deformations, their means, and this energy cost can be found in \cite{bll-11}. 

\subsection{Wasserstein Barycenters and the Discrete Barycenter Problem}

The {\em weighted Wasserstein barycenters} are optimal solutions to these problems. The distance between two probability measures $\mu$ and $\nu$ supported on $\R^d$ is calculated using the square of the quadratic Wasserstein distance
\[W_2 (\mu, \nu)^2 = \inf \Big \{ \int_{\R^d \times \R^d} \|x-y\|^2 d\gamma(x,y), \gamma \in \Pi(\mu,\nu) \Big \},\]
where $\Pi(\mu,\nu)$ denotes the set of all measures on $\R^d \times \R^d$ with $\mu, \nu$ as marginals. The set $\Pi(\mu,\nu)$ represents the set of transport plans between $\mu$ and $\nu$; see \cite{ac-11} and \cite{v-09} for more details. Then, given probability measures $P_1,{\dots},P_n$ on $\R^d$ and a strictly positive weight vector $\lambda\in \R^n_{+}$ with $\sum_{i=1}^n \lambda_i=1$, a Wasserstein barycenter is a probability measure $\bar P$ on $\R^d$ satisfying
\begin{equation}\label{eqn:weightedbary}
\varphi(\bar P):=\sum\limits_{i=1}^n \lambda_i W_2(\bar P, P_i)^2 = \inf\limits_{P\in \mathcal{P}^2(\R^d)} \sum\limits_{i=1}^n \lambda_i  W_2(P,P_i)^2,
\end{equation}
where $\mathcal{P}^2(\R^d)$ is the set of all probability measures on $\R^d$ with finite second moments. Informally, a barycenter $\bar P$ is a measure such that the total transport from $\bar P$ to all $P_i$ with respect to the quadratic Wasserstein distance is minimal.
 
Existence and uniqueness of barycenters for continuous probability measures is established in the study of the problem by Agueh and Carlier \cite{ac-11}, whose work is a foundation for much of the recent literature on barycenters. However, direct computation of barycenters for continuous probability measures has proven intractable outside of special cases \cite{cd-14,cp-16}, in part because an evaluation of the Wasserstein distance is computationally challenging in its own right. The computational effort of optimization on this distance creates a need for good approximate solutions. 

This is one reason for the interest in Wasserstein barycenters where the measures $P_i$ are supported on a {\em finite} set of points. We call such measures {\em discrete}. This setting arises through a discretization of the space of interest, as in \cite{coo-15}, which is one of the most powerful tools for heuristics to find an approximation of a barycenter. Additionally, many applications in operations research have naturally discrete support sets. In facility location problems, the support corresponds to transport destination locations; see the firehouse location example presented in \cite{abm-16}. The computation of a barycenter for measures with finite support is also used in distribution clustering \cite{ywwl-17}.

We formally state the problem of computing a  {\em discrete barycenter}, i.e., a barycenter for a given set of {\em discrete probability measures} $P_1,\dots,P_n$. We denote the finite support set of $P_i$  as $\supp(P_i) = \{\x_{ij}\big| j = 1,...,|P_i|\}$, where the size $|P_i| $ is the number of support points of measure $P_i$. 
Each $\x_{ij} \in \supp(P_i)$ has a corresponding mass $d_{ij}>0 $, and $\sum_{j=1}^{|P_i|} d_{ij} = 1$ for each $P_i$. In this paper, we are looking to improve exact algorithms to solve the:

\vspace{.1in}
\noindent{\bf Discrete Barycenter Problem}
 
 \vspace{.1in}
\textbf{Input:} $\,\,\,\,$ Discrete probability measures $P_1, \ldots, P_n$, weight vector $\lambda\in \R^n_{+}$  
\vspace{.1in}

\textbf{Output:} Discrete barycenter $\bar P$ for $P_1, \ldots, P_n$ and $\lambda$.\\

A proof of the existence of optimal solutions to the Discrete Barycenter Problem, as well as the linear programming formulation for producing these barycenters that is the starting point for our study, is established in \cite{abm-16}. 

\vspace{-.15in}
\subsection{The Set $S$ of Possible Barycenter Support Points}

Barycenters satisfy a property that is crucial for many applications: there exists an optimal transport to the measures that is {\em non-mass-splitting} \cite{ac-11,abm-16}, i.e., the mass of each barycenter support point is transported only to a single support point in each measure. 
This observation can be used to prove that a discrete barycenter is a discrete measure itself, supported on a subset of the set 
\begin{equation*}
S= \Big \{\sum\limits_{i=1}^n \lambda_i \x_{ij} \ : \  \x_{ij}\in \supp(P_i) \text{ for any }j = 1,\ldots,|P_i| \Big \}:=\{\x_1,\dots,\x_{|S|}\}.
\end{equation*}
This is the set of all convex combinations of support points, one from each measure $P_i$, given by the fixed $\lambda_i$ \cite{abm-16}. We call its elements $\x_k$ the {\em weighted means}. (A single-indexed $\x_k$ always refers to an element in $S$, in contrast to the double-indexed $\x_{ij}$ for support points of the original measures.) The size of $S$ is bounded by the product of the sizes of the input measures and thus may grow exponentially in the number of input measures. The linear program in \cite{abm-16} for the computation of discrete barycenters is based on finding the optimal mass on each `possible support point' in $S$. Thus, the exponential scaling of $S$ can be an extreme challenge for practical computations. 

The actual size of $S$ depends greatly on the underlying data. In this paper, we consider different types of data. We say the measures $P_1,\dots,P_n$ are {\em in general position} if different combinations of $\x_{ij}\in \supp(P_i)$ always induce different weighted means $\x_k$; this is the worst-case scenario producing an $S$ of largest possible size: $|S|=\prod_{i=1}^n |P_i|$. Additionally, we consider {\em structured data}: the support sets of the $P_i$ have known properties. Structured data has two beneficial computational properties: significant repetition in the weighted means creates a smaller set $S$, and the a priori knowledge allows for easier construction of the set. In particular, we consider a {\em highly structured} case in which all measures are supported on the same regular $d$-dimensional grid. 
Finally, the {\em partially structured} case refers to measures with support sets that can be partitioned into a part in general position and a part that is structured. Specifically, we study a situation in which the support of each measure is split into a part contained in a regular grid and a part outside of the grid. 

\vspace{-.15in}
\subsection{Linear Programming for Discrete Barycenters}\label{sec:sec13}

In contrast to the worst-case, exponential-sized possible support set, there always exists a barycenter with provably {\em sparse support}.
More precisely, there is a barycenter $\bar P$ with
\begin{equation}\label{sparseequ}
|\supp(\bar{P})| \leq (\sum_{i=1}^n |P_i|)  - n + 1.
\end{equation}

This is a tiny fraction of the number of possible support points in $S$, whose size is bounded by the product, rather than the sum, of the sizes of the original measures \cite{abm-16}. Such extreme sparsity is computationally promising, although no strategy to select the optimal support points from the large set $S$ is known. The sparsity can form the basis of an approximation scheme in two primary ways: selecting a relatively large, representative set of possible support points, or by choosing a small initial set and updating the possibilities after a fixed number of iterations \cite{ywwl-17}. Another approach is a strongly polynomial $2$-approximation algorithm based on the restriction of the set of support points for an approximate barycenter to the union of supports of the measures $P_1,\dots,P_n$ \cite{b-17}. In a different, heuristic style of approximation, algorithms based on projection and utilizing an entropic regularization term produce solutions with qualitative smoothing and full support \cite{bccnp-14,cd-14}. In this paper, we focus on exact barycenters, thus maintaining sparsely supported solutions. Our goal is to devise linear programming formulations with fewer variables and constraints. The formulations presented are also useful in LP-based approximation schemes. 

When representing a barycenter $\bar P$, we use values $z_k$, $ k=1,\ldots,|S|$, to denote the mass on support point $\x_k\in S$. Further, the values $y_{ijk}$ denote mass transported from $\x_k\in S$ to $\x_{ij} \in \supp(P_i)$ (for all $i=1,\dots,n$ and $j=1,\dots,|P_i|$).
With this notation, the transportation cost  (\ref{eqn:weightedbary}) in the discrete setting can be written as: 
\begin{equation}\label{eqn:LPobjective}
\varphi(\bar P):= \inf\limits_{P\in \mathcal{P}^2(\R^d)} \sum\limits_{i=1}^n \lambda_i  W_2(P,P_i)^2 = \min \sum_{i=1}^n\lambda_i\sum_{j = 1}^{|P_i|}\sum_{k=1}^{|S|}\|\x_k - \x_{ij} \|^2 y_{ijk}.
\end{equation}
Note that (\ref{eqn:LPobjective}) is a linear objective function, as the $\x_k$ and $\x_{ij}$ are part of the input. 

It is open whether the Discrete Barycenter Problem can be solved in polynomial time. The best known algorithms for exact barycenters are based on linear programming \cite{abm-16,coo-15,sld-18,ywwl-17}. The program is established as follows: Use variables $z_k$ to measure the (unknown) masses of a barycenter supported on the elements $\x_k\in S$; thus the variables $z_k$ themselves define the barycenter. The mass $z_k$ is transported to each measure $P_i$, the amount of which is indicated by the variables $y_{ijk}$. This yields capacity constraints $\sum_{j=1}^{|P_i|} y_{ijk}  =  z_k$ (for all  $i=1,\dots,n$ and $k=1,\ldots,|S|$), as the mass transport to each $P_i$ cannot exceed the available barycenter mass. Further, each support point $\x_{ij}$ in each measure $P_i$ receives exactly its mass $d_{ij}$ from the barycenter support points, which can be stated as $\sum_{k=1}^{|S|} y_{ijk}  = d_{ij}$ (for all $i=1,\dots,n$ and $j=1,\dots,|P_i|$). 

Thus a linear program for the computation of a barycenter may be formulated as: 
\begin{equation*}\label{baryLP}
\begin{array}{crll}
\tag{original} \mathrm{min}   &\multicolumn{3}{l}{ \sum\limits_{i=1}^n\lambda_i\sum\limits_{j = 1}^{|P_i|}\sum\limits_{k=1}^{|S|}\|\x_k - \x_{ij} \|^2 y_{ijk}} \nonumber \\
\mathrm{s.t.} &\sum\limits_{j=1}^{|P_i|} y_{ijk} & = \spc z_k,  &\forall i=1,\ldots,n,\spc\forall k=1,\ldots,|S|,\nonumber\\ 
& \sum\limits_{k=1}^{|S|} y_{ijk} & = \spc d_{ij}, &\forall i=1,\ldots,n,\spc\forall j=1,\ldots,|P_i|,\nonumber\\
& y_{ijk}  &\geq  \spc0, &\forall i=1,\ldots,n,\spc\forall j=1,\ldots,|P_i|,\spc\forall k=1,\ldots,|S|.\\ 
\end{array}
\end{equation*}

Since the $z_k$ variables represent a barycenter, which itself is a probability measure, we do also require $z_k \geq 0$ and $\sum_{k=1}^{|S|} z_k = 1$; however, these properties are naturally enforced by the capacity constraints and the fact that $\sum_{j=1}^{|P_i|} d_{ij} = 1$. Any optimal vertex of the LP has sparse support, i.e., it satisfies (\ref{sparseequ}). See \cite{abm-16,coo-15} for more details.  We summarize this information in the following proposition.

\begin{proposition}\label{thm:5eq}
The Discrete Barycenter Problem can be solved using LP (\ref{baryLP}). Any optimal vertex corresponds to a sparse barycenter.
\end{proposition}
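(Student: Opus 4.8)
The plan is to establish the two assertions separately: first that the optimal value of LP~(\ref{baryLP}) equals the minimal transport cost and that an optimal $z$ recovers a barycenter, and second that every optimal vertex satisfies the sparsity bound~(\ref{sparseequ}). For the first part, I would sandwich the optimal value. Given any feasible $(z,y)$, the constraints $\sum_k y_{ijk}=z_j$ and $\sum_j y_{ijk}=d_{ik}$ force $\sum_j z_j=\sum_{j,k}y_{ijk}=\sum_k d_{ik}=1$ (the same identity for each $i$), so $z$ defines a probability measure $\bar P$ supported on $S$ and $y$ is a transport plan from $\bar P$ to each $P_i$. Since the objective is exactly the transport cost in~(\ref{eqn:LPobjective}), it is at least $\sum_i\lambda_i W_2(\bar P,P_i)^2=\varphi(\bar P)$, hence at least the barycenter cost; so the LP optimum is $\geq$ the barycenter cost. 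Conversely, the discussion preceding the statement already guarantees a barycenter $\bar P^*$ supported on $S$; taking $z_j$ to be its masses and $y$ an optimal transport plan to each $P_i$ gives a feasible point whose objective equals $\varphi(\bar P^*)$, so the LP optimum is $\leq$ the barycenter cost. Equality follows, and for any optimal $(z,y)$ we then get $\varphi(\bar P)$ equal to the barycenter cost, i.e. $\bar P$ is a barycenter.

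For the sparsity of optimal vertices I would use the basic-feasible-solution characterization: at a vertex the columns of the constraint matrix indexed by the positive variables are linearly independent, equivalently the only $(\delta_j),(\eta_{ijk})$ supported on those variables with $\sum_k\eta_{ijk}=\delta_j$ (all $i$, all $j$ with $z_j>0$) and $\sum_j\eta_{ijk}=0$ (all $i,k$) is zero. Write $s=|\supp(\bar P)|$ for the number of positive $z_j$. For each measure $i$, form the bipartite support graph $G_i$ on the positive-$z_j$ means and the points of $P_i$, with an edge whenever $y_{ijk}>0$. A cycle in $G_i$ would admit alternating $\pm\varepsilon$ on its edges with $\delta=0$, a nonzero kernel element; hence each $G_i$ is a forest. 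Since $z_j>0$ forces an edge from $\x_j$ into every measure and $d_{ik}>0$ forces an edge at every point of $P_i$, $G_i$ spans all $s+|P_i|$ nodes, and its components induce a partition $\Pi_i$ of the $s$ support points into $t_i$ blocks, each containing a point of $P_i$, so $t_i\leq|P_i|$.

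On a forest, the flow $\eta$ for measure $i$ is determined by the prescribed node divergences and exists exactly when they sum to zero on each component; the vertex condition therefore reduces to: the constraints $\sum_{\x_j\in B}\delta_j=0$, ranging over all blocks $B$ of all $\Pi_i$, force $\delta=0$. Equivalently, the block-indicator vectors span $\R^s$, so their rank equals $s$. Because each $\Pi_i$ contributes $t_i$ indicators summing to the all-ones vector $\mathbf 1$, the subspaces they span all contain $\mathbf 1$, and a standard dimension count gives rank $\leq\sum_i t_i-(n-1)$. Combining with $t_i\leq|P_i|$ yields $s\leq\sum_i|P_i|-n+1$, which is~(\ref{sparseequ}).

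The marginal bookkeeping and the cycle-perturbation are routine; the main obstacle is the sparsity count, because $z_j$ is shared across all $n$ measures, so the global support graph need not be a forest and the naive transportation-polytope count fails. The resolution is to keep the per-measure forests but track the single linear relation they all share --- the all-ones vector --- which is precisely what produces the $-(n-1)$ correction in the bound.
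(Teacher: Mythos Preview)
The paper does not give its own proof of this proposition; it states the result and refers the reader to \cite{abm-16,coo-15} for the details. Your proposal therefore goes beyond what the paper itself provides.

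Both halves of your argument are correct. The sandwich between the LP optimum and the barycenter cost is clean and uses only facts already recalled in the introduction (support of barycenters in $S$, and the expression~(\ref{eqn:LPobjective}) for the cost). For the sparsity part, your reasoning is sound: the cycle-perturbation shows each per-measure bipartite support graph $G_i$ is a forest at a vertex; on a forest the flow is uniquely determined by the divergences, so the kernel condition collapses to the block-sum constraints on $\delta$; and since each partition $\Pi_i$ contributes $t_i$ linearly independent indicators all containing $\mathbf 1$ in their span, the total rank is at most $\sum_i t_i-(n-1)\le\sum_i|P_i|-n+1$, giving~(\ref{sparseequ}). This is the standard route taken in the cited references, so there is no methodological divergence to discuss---you have essentially reconstructed the deferred proof.
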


One of the important observations about LP (\ref{baryLP}) is that its size may scale exponentially in the number $n$ of measures \cite{b-17}. For simplicity in denoting the worst-case scenario, assume $|P_i|=p_{\max}$ for all $i=1,\dots,n$. For measures $P_1,\dots,P_n$ with support points in general position, $|S|=\prod_{i=1}^n |P_i|=(p_{\max})^n$.  Then LP (\ref{baryLP}) has $n(p_{\max})^{n+1}+(p_{\max})^n$ variables and $n (p_{\max})^n+n p_{\max}$ equality constraints.

Most applications arise in dimension two or three, but the dimension does not actually appear as a factor in the exponential scaling of LP (\ref{baryLP}). Even for rather small input sizes, computations based on LP (\ref{baryLP}) already are challenging:  The computation of a barycenter for the firehouse location problem in \cite{abm-16}, i.e., for $n=8$ measures consisting of the same $p_{\max}=9$ support points (a structured data set), already took several minutes on a standard laptop (2.3 GHz Intel Core i7 MacBook Pro). 

\subsection{Contributions and Outline}

The poor scaling of LP (\ref{baryLP}) is the motivation for our research. We devise new, smaller LP formulations that allow exact computations for larger instances of the Discrete Barycenter Problem. In this paper, we improve on LP  (\ref{baryLP}) while retaining all optimal solutions. By using the optimality conditions of discrete barycenters in a better way, we present several ways to reduce the model size.

In {\bf Section \ref{sec:Form}}, we formally devise these improvements. First, we explain why only a strict subset of the variables $y_{ijk}$ is required for a better formulation LP (\ref{barymodLP}). We show that LP (\ref{barymodLP}) always is an improvement on LP (\ref{baryLP}): it has strictly fewer variables and the nonzero entries in the constraint matrix of LP (\ref{barymodLP}) are a strict subset of the nonzero entries in the corresponding columns of the constraint matrix of LP (\ref{baryLP}). 

Second, we turn to an alternative LP (\ref{LPw}) that finds a discrete barycenter. It was used in  \cite{abm-16,m-16} to show the existence of a sparse barycenter for all discrete barycenter problems. Due to an inherent, unavoidably exponential scaling that is independent of the underlying data, it has not been previously considered for computational purposes. We use this linear program to reveal that for two input measures, the discrete barycenter problem is a classical transportation problem of the type introduced by Ford and Fulkerson \cite{ff-56b}.

Third, we combine the two above LPs in a hybrid model, LP (\ref{LPhybrid}). The key idea is that the choice of model can be split into independent decisions for each combination of input support points. For partially structured data, a mix of the strategies of LP (\ref{barymodLP}) and LP (\ref{LPw}) through LP (\ref{LPhybrid}) retains the beneficial properties of both formulations.

{\bf Section \ref{sec:theory}} is dedicated to an analysis of the sizes of the new models. In Section \ref{sec:datageneral}, we discuss data in general position. We show that both LP (\ref{barymodLP}) and LP (\ref{LPw}) are dramatic improvements over LP (\ref{baryLP}), and that LP (\ref{LPw}) is the smallest model. Further, we show that a best implementation of LP (\ref{LPhybrid}) becomes identical to LP (\ref{LPw}). 

In Section \ref{sec:modgrid}, we discuss data supported in a $d$-dimensional regular grid. In this highly structured setting, the size of $S$ becomes polynomial in $n$ for fixed dimension $d$. This implies that LP (\ref{baryLP}) scales polynomially (and recall that LP (\ref{barymodLP}) and LP (\ref{LPhybrid}) provide further improvements), in strong contrast to the exponential scaling of LP (\ref{LPw}) for all data. We provide a formal proof that the Discrete Barycenter Problem (for evenly weighted measures) can be solved in strongly polynomial time in this setting.

{\bf Section \ref{sec:process}} is a study of the preprocessing required to set up the various LP formulations. In Section \ref{sec:hard}, we prove that the construction of all the LPs is hard (exponential unless $P=NP$), even if the resulting LPs are of polynomial size. More precisely, we show that a decision version of one small step of the necessary preprocessing is already NP-hard. This step may even be repeated exponentially many times for LPs (\ref{barymodLP}) and (\ref{LPhybrid}).

In Section \ref{sec:apriori}, we discuss how expert knowledge that data is supported on a regular grid can help. Under the additional assumption that the measures are evenly weighted, we devise a simple and efficient preprocessing routine to achieve a significant improvement in setup time for LP (\ref{baryLP}). This routine also avoids the inefficient preprocessing required for an exact setup of LPs (\ref{barymodLP}) and LP (\ref{LPw}), at the cost of a minor increase in linear program size.  

{\bf Section \ref{sec:comp}} contains our computational experiments. We use three representative types of data: a geospatial data set in general position, the well-known MNIST digits data set (c.f. \cite{lbh-98}), and a tailored data set that combines the properties of these two sets.

In Section \ref{sec:compgeneral}, we briefly exhibit the computational advantages of LP (\ref{LPw}) over LP (\ref{baryLP}) for data in general position. We use crime data for Denver County, publicly available as part of the Denver Open Data Catalog (\url{www.denvergov.org/opendata}), to construct such a data set. Specifically, we use the geographical locations of incidents in different months and years to represent a set of crime patterns. A barycenter for these crime patterns is readily interpreted as a set of locations for which police presence could lead to a fast crime response.

Section \ref{sec:digitcomp} shows our computational experiments for grid-structured data. We use the MNIST data set of handwritten digits (\url{http://yann.lecun.com/exdb/mnist/}), which is widely used for benchmarking in machine learning; see \cite{lbh-98} for more information. Each digit is supported on a subset of a $16 \times 16$ grid. A set of barycenters, one computed for the measures representing each digit $0,1,\dots,9$, allows the classification of a new measure as one of the digits -- it suffices to evaluate the cost of transport from each barycenter to the new measure and pick the smallest value. 

In our experiments, we consider various settings: Computations with and without expert knowledge (that the data is supported on a grid), and the impact of preprocessing on model sizes and computation times in these settings. The fastest results are achieved when using the preprocessing routines from Section \ref{sec:apriori} - a combination of fast preprocessing and LP sizes that are just slightly larger than exact formulations of LP (\ref{barymodLP}) or LP (\ref{LPhybrid}) leads to a best practical performance.

In Section \ref{sec:besthybrid}, we highlight the advantages of LP (\ref{LPhybrid}). We construct a data set similar to the MNIST digits data: The letter `i' is recorded in a combination of two grids of different coarseness -- a finer grid is used to record the position and detail of the i-dot, since most of the differences between images lie in the dot. Here, LP (\ref{LPhybrid}) greatly outperforms the others because of the ability to adapt the representation of $S$ to the different parts of the data. Treating the i-dot as data in general position and the rest of the letter as grid-structured gives the smallest model and fastest computation times. 

We finish with some concluding remarks and open questions in Section \ref{sec:conc}.

\section{Improved Linear Programs}\label{sec:Form}

In the following, we describe three ways to improve on the formulation of LP (\ref{baryLP}). The first one is a strict improvement, the second one is the smallest model for data in general position, and the third one is a hybrid of the former two.

\subsection{LP (\ref{barymodLP}): Optimality Conditions for $y$-Variables}\label{sec:ystart}

For an initial reduction in size, we note that variables can be dropped from LP (\ref{baryLP}) while keeping all optimal solutions in the feasible set. Due to the non-mass-splitting property, we know that any $\x_k = \sum_{i = 1}^n \lambda_i \x_{ij} \in S$ is the optimal support point for mass which is to be transported to all of the $\x_{ij}$ from which it is constructed. This implies that, in an optimal solution, $\x_k$ never transports to any $\x_{ij}$ not in a weighted mean calculation producing $\x_k$. Equivalently, in an optimal solution, $y_{ijk} = 0$ for all such pairs $(\x_k, \x_{ij})$, and we can therefore eliminate those $y_{ijk}$ from the formulation. 

Additionally, we note that the capacity constraints in LP (\ref{baryLP}), \[\sum\limits_{j=1}^{|P_i|} y_{ijk} = \spc z_k,  \forall i=1,\ldots,n,\spc\forall k=1,\ldots,|S|,\nonumber\\ \] could be rewritten in a way that eliminates the variables $z_k$. Specifically, for each support point $\x_k$, the transport to each $P_i$ has to be equal:
\[ \sum\limits_{j=1}^{|P_1|} y_{1jk}=  \sum\limits_{j=1}^{|P_2|} y_{2jk}=\dots=\sum\limits_{j=1}^{|P_n|} y_{njk}.\]

However, we do not implement this change. It would remove $|S|$ variables and constraints, only a minor benefit to the program size. Meanwhile, the dramatically increased number of nonzero entries in the constraint matrix has an overall negative effect on computations.


Instead, we develop a reduced formulation based solely on the removal of the extraneous $y$-variables. We first introduce some notation. Let $S_{ij}$ be the set of indices $k$ for which $\x_k \in S$ can be computed as a weighted mean of a set of support points that includes $\x_{ij}$.  Formally,
$$ S_{ij}=\{\; k \; : \;  \x_k=\lambda_i\x_{ij} + \sum\limits_{l=1, l\neq i}^n \lambda_l \x_{lj'} \text{ for some } \x_{lj'}\in \supp(P_l)\}.$$

Conversely, let $S_k$ be the set of index tuples $(i,j)$ of support points $\x_{ij}$ which contribute to a computation of $\x_k$, i.e., 
$$ S_k=\{\; (i,j) \; : \;  \x_k=\lambda_i\x_{ij} + \sum\limits_{l=1, l\neq i}^n \lambda_l \x_{lj'} \text{ for some } \x_{lj'}\in \supp(P_l)\}.$$

With this notation, LP (\ref{baryLP}) can be improved to a smaller formulation as follows.
\begin{equation*}\label{barymodLP}
\begin{array}{crll}
\tag{reduced} \mathrm{min} & \multicolumn{3}{l}{\sum\limits_{i=1}^n\lambda_i\sum\limits_{j = 1}^{|P_i|}\sum\limits_{k:k \in S_{ij}}\|\x_k - \x_{ij} \|^2 y_{ijk}} \nonumber \\
\mathrm{s.t.} &\sum\limits_{j:(i,j) \in S_k} y_{ijk}  &= \spc\spc z_k,  &\forall i=1,\ldots,n,\spc\forall k=1,\ldots,|S|\nonumber\\
&\sum\limits_{k:k \in S_{ij}} y_{ijk}  &= \spc\spc d_{ij}, &\forall i=1,\ldots,n,\spc\forall j=1,\ldots,|P_i|\\
&y_{ijk} & \geq  \spc\spc0, &\forall i=1,\ldots,n,\spc\forall j=1,\ldots,|P_i|,\spc \forall k \in S_{ij} \nonumber \\
\end{array}
\end{equation*}

Since the optimal vertices of LPs (\ref{baryLP}) and (\ref{barymodLP}) are in one-to-one correspondence, we can solve either of them to the same result. We obtain the following theorem. 

\begin{theorem}\label{thm:6eq}
The Discrete Barycenter Problem can be solved using LP (\ref{barymodLP}). Any optimal vertex corresponds to a sparse barycenter.
\end{theorem}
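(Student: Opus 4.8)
The plan is to deduce Theorem~\ref{thm:6eq} from Proposition~\ref{thm:5eq} by realizing the feasible region of LP~(\ref{barymodLP}) as a face of the feasible region of LP~(\ref{baryLP}). Concretely, I would consider the zero-extension map sending a feasible point of LP~(\ref{barymodLP}) to the point of LP~(\ref{baryLP}) with the same $z_j$, the same $y_{ijk}$ for $(i,k)\in S_j$, and $y_{ijk}=0$ for all $(i,k)\notin S_j$. Reading the two constraint families against each other shows this is an affine bijection onto $F:=\{(z,y)\text{ feasible for }(\ref{baryLP}) : y_{ijk}=0 \text{ whenever } (i,k)\notin S_j\}$, and that it preserves the objective value. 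Since each hyperplane $\{y_{ijk}=0\}$ supports an inequality $y_{ijk}\ge 0$ of LP~(\ref{baryLP}), the set $F$ is a face of that polytope; in particular every vertex of $F$ is a vertex of LP~(\ref{baryLP}). Because the map is objective-preserving, this already yields that the optimal value of LP~(\ref{baryLP}) is at most that of LP~(\ref{barymodLP}).

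The crux is the reverse inequality, namely that some optimal solution of LP~(\ref{baryLP}) lies in $F$: no optimal transport moves mass from a weighted mean to a point that does not help produce it. Here I would invoke the two optimality properties recalled in the introduction. Starting from a sparse barycenter together with a non-mass-splitting optimal transport, Proposition~\ref{thm:5eq} gives an optimal solution $(z^*,y^*)$ of LP~(\ref{baryLP}). For each $\x_j$ with $z_j^*>0$, non-mass-splitting assigns to every measure $P_i$ a single point $\x_{i,k(i,j)}$ receiving all of $z_j^*$. If $\x_j\neq \sum_{i=1}^n\lambda_i\x_{i,k(i,j)}$, then relocating the support point from $\x_j$ to the weighted mean $\sum_{i=1}^n\lambda_i\x_{i,k(i,j)}\in S$, while keeping the assignment fixed, strictly lowers $\sum_{i=1}^n\lambda_i\|\cdot-\x_{i,k(i,j)}\|^2$ by the unique-minimizer property, contradicting optimality. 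Hence $\x_j=\sum_{i=1}^n\lambda_i\x_{i,k(i,j)}$, so $(i,k(i,j))\in S_j$ for every $i$, and therefore $y^*_{ijk}=0$ for all $(i,k)\notin S_j$. Thus $(z^*,y^*)\in F$, the two optimal values agree, and $F$ contains an optimum.

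With equality of the optimal values in hand, the theorem follows. Any optimal solution of LP~(\ref{barymodLP}) maps to an optimal solution of LP~(\ref{baryLP}), which by Proposition~\ref{thm:5eq} is a barycenter together with an optimal transport, so LP~(\ref{barymodLP}) solves the Discrete Barycenter Problem. For the vertex statement, an optimal vertex of LP~(\ref{barymodLP}) corresponds under the affine bijection to an optimal vertex of $F$; since $F$ is a face, this is a vertex of LP~(\ref{baryLP}), and it is optimal there because the optimal values coincide. Proposition~\ref{thm:5eq} then identifies it as a sparse barycenter satisfying (\ref{sparseequ}).

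I expect the main obstacle to be the middle step: formally ruling out that an optimal transport leaks mass from $\x_j$ to a non-constituent $\x_{ik}$. The delicate points are that the relocation argument must be run on a solution that is simultaneously non-mass-splitting and cost-optimal, and that the strict cost decrease furnished by the unique-minimizer property must be seen to contradict global optimality of the barycenter rather than mere feasibility. The surrounding polytope bookkeeping — that $F$ is a face and that the zero-extension is an objective-preserving affine bijection — is routine by comparison.
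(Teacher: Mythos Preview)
Your proposal is correct and follows the same approach as the paper. The paper's own argument is extremely terse: it asserts in one sentence that ``the optimal vertices of LPs~(\ref{baryLP}) and~(\ref{barymodLP}) are in one-to-one correspondence,'' relying on the preceding paragraph (non-mass-splitting plus the unique-minimizer property forces $y_{ijk}=0$ whenever $(i,k)\notin S_j$) as justification. Your zero-extension onto a face $F$ and the relocation contradiction are exactly a rigorous unpacking of that claim, and your identification of the ``leakage'' step as the crux matches precisely what the paper treats informally.
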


Note that at least one pair $(\x_k,\x_{ij})$, such that $\x_{ij}$ is not part of any weighted means construction of $\x_k$, exists for any dimension $\mathbb{R}^d$, as long as $|P_i| \geq 2$ for at least one $i = 1, \dots, n$. In other words, for non-trivial input there is an $(i,j)\notin S_k$ for some $k$. Thus LP (\ref{barymodLP}) always provides a strict reduction in the number of variables and the number of non-zero entries in the constraint matrix.

\begin{lemma}\label{lem:lessy}
For a set of measures $P_1,\dots,P_n$, at least one with $|P_i| \geq 2$, LP (\ref{barymodLP}) has strictly fewer variables than LP (\ref{baryLP}). Further, the nonzero entries of the constraint matrix for LP (\ref{barymodLP}) are a strict subset of the nonzero entries in the corresponding columns of the constraint matrix of LP (\ref{baryLP}).
\end{lemma}

In Section \ref{sec:theory}, we turn to the dramatic reduction in size from LP  (\ref{baryLP}) to LP  (\ref{barymodLP}) -- often several orders of magnitude -- in more detail.

\subsection{LP (\ref{LPw}): Fixed Transport}
An alternative linear program for the Discrete Barycenter Problem was used in \cite{abm-16,m-16} to prove the existence of a sparse barycenter, but it was not considered for computational purposes. However, as we will see in Sections \ref{sec:datageneral} and \ref{sec:compgeneral}, this is the best approach for data in general position.

The key idea is to treat each combination of original support points that gives a weighted mean separately, even if some combinations produce the same weighted mean $\x_k$. Applying this idea to LP (\ref {barymodLP}), a variable $z_k$ is used for each {\em combination} of original support points. Then $|S_k|=n$, as $S_k$ contains precisely one pair $(i,j)$ for each $i\leq n$. When mass is associated with variable $z_k$, it is also fully associated to all of the corresponding $y_{ijk}$ with $(i,j)\in S_k$. This implies that the variables $y_{ijk}$ can be eliminated through $y_{ijk}=z_k$ for all $(i,j)\in S_k$. Informally, assigning mass to $z_k$ gives rise to a {\em fixed transport} to the measures.

We now develop an LP formulation, in our own notation, based on this idea. First, we define the set of all combinations of original support points, one from each measure, as 
$$S^* = \{ (\x_{1j}, \ldots, \x_{nj}) \ : \ \x_{ij} \in \supp(P_i) \text{ for any }j = 1,\ldots,|P_i|\}:=\{s_1^*,\dots,s^*_{|S^*|}\}.$$
There is an intimate relation of $S^*$ and $S$: Each tuple $s^*_h=(\x_{1j}, \ldots, \x_{nj})$, $h=1,\dots,|S^*|$, corresponds to a set of original support points $\x_{ij}$, $i=1,\dots,n$, and their weighted mean $\x_k = \sum_{i=1}^n \lambda_i \x_{ij} \in S$. Generally, it is possible that multiple $s^*_h\in S^*$ are associated with the same $\x_k\in S$. In turn, each $\x_k\in S$ is associated with at least one $s^*_h \in S^*$. We immediately obtain $|S^*| \geq |S|$. For data in general position, $|S^*| = |S|$ and there is a bijection between the tuples  $s^*_h$ and weighted means $\x_k$.

Next, we introduce a variable $w_h$ for each $s^*_h$, $h = 1, \ldots, |S^*|$, representing mass associated with that combination. The corresponding cost $c_h$ of transporting a unit of mass is:
\begin{equation*}
c_h = \sum_{i=1}^n \lambda_i ||\x_k-\x_{ij}||^2.
\end{equation*}
Finally, we define sets $S_{ij}^*$ similarly to the sets $S_{ij}$.  $S_{ij}^*$ is the set of indices $h$ in $1, \ldots, S^*$ such that the $i^{th}$ component of $s_h^*$ is $\x_{ij}$. Formally, $$ S_{ij}^*=\{\; h \; : \;  s_h^*=(\dots,\x_{ij},\dots)\}.$$

We now have all the ingredients for the LP. Due to the fixed transport, the barycenter capacity constraints in LPs (\ref{baryLP}) and (\ref{barymodLP}) are naturally enforced and can be eliminated. All variables $y_{ijk}$ are also eliminated, and so we obtain the LP:
\begin{equation*}\label{LPw}
\begin{array}{crll}
\tag{general}  \mathrm{min}  &\multicolumn{3}{l}{\sum\limits_{h = 1}^{|S^*|} c_h w_h }\nonumber \\
\mathrm{s.t.  } &\sum\limits_{h:h \in S_{ij}^*} w_h &= \spc\spc d_{ij}, &\forall i=1,\ldots,n,\spc\forall j=1,\ldots,|P_i|\\
 &w_h  &\geq  \spc\spc 0, &\forall  h=1,\ldots,|S^*|.\nonumber
\end{array}
\end{equation*}

Barycenter masses $z_k$ and an optimal transport $y_{ijk}$ can readily be reconstructed from the $w_h$, and the optimal vertices of LPs (\ref{baryLP}) and (\ref{LPw}) again are in one-to-one correspondence. 

\begin{theorem}\label{thm:7eq}
The Discrete Barycenter Problem can be solved using LP (\ref{LPw}). Any optimal vertex corresponds to a sparse barycenter.
\end{theorem}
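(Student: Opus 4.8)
The plan is to establish an objective-preserving correspondence between the feasible solutions of LP (\ref{LPw}) and those of LP (\ref{baryLP}), and then to read off sparsity directly from the structure of LP (\ref{LPw}). First I would define the forward map. Given a feasible $w$, associate to each tuple $s_h^*=(\x_{1k},\dots,\x_{nk})$ its weighted mean $\x_{j(h)}=\sum_{i=1}^n\lambda_i\x_{ik}\in S$, and set
\begin{equation*}
z_j=\sum_{h\,:\,j(h)=j} w_h,\qquad y_{ijk}=\sum_{h\in S_{ik}^*\,:\,j(h)=j} w_h.
\end{equation*}
A short check shows $(z,y)$ is feasible for LP (\ref{baryLP}): summing $y_{ijk}$ over $k$ collects exactly the tuples with $j(h)=j$, giving $\sum_k y_{ijk}=z_j$, while summing over $j$ collects all of $S_{ik}^*$, giving $\sum_j y_{ijk}=\sum_{h\in S_{ik}^*}w_h=d_{ik}$. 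Grouping the objective $\sum_h c_h w_h$ by $(i,k,j)$ reproduces $\sum_i\lambda_i\sum_{j,k}\|\x_j-\x_{ik}\|^2 y_{ijk}$, so the map preserves cost. Hence the optimal value of LP (\ref{baryLP}) is at most that of LP (\ref{LPw}).

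For the reverse inequality I would use the non-mass-splitting property recalled in Section \ref{sec:Intro}. Taking an optimal vertex $(z,y)$ of LP (\ref{baryLP}), that property routes, for each $j$ with $z_j>0$ and each $i$, the entire mass $z_j$ to a single $\x_{ik}$; thus $\x_j$ is generated by one distinguished tuple $s_h^*\in S^*$. Setting $w_h=z_j$ there and $w_{h'}=0$ on all other tuples yields a feasible point of LP (\ref{LPw}) of equal cost, giving the matching inequality. Therefore the two optimal values coincide, the forward image of an optimal $w$ is an optimal $(z,y)$, and by Proposition \ref{thm:5eq} this $(z,y)$ is a barycenter; so LP (\ref{LPw}) solves the Discrete Barycenter Problem. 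The same two maps show that optimal vertices are carried to optimal vertices and back, establishing the claimed one-to-one correspondence.

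For the sparsity claim I would argue directly on LP (\ref{LPw}), which is precisely why this formulation serves as the proof method in \cite{abm-16,m-16}. The constraint matrix is the $0/1$ incidence matrix whose rows are indexed by pairs $(i,k)$, with a $1$ in row $(i,k)$ of column $h$ exactly when $h\in S_{ik}^*$. Since each tuple $s_h^*$ contains exactly one support point from each measure, the rows for a fixed $i$ sum to the all-ones vector; this produces $n$ identical row-sums, hence at least $n-1$ linear dependencies, so the rank is at most $(\sum_{i=1}^n|P_i|)-n+1$. A vertex of LP (\ref{LPw}) is a basic feasible solution and so has at most that many nonzero entries $w_h$. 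As every support point $\x_j$ with $z_j>0$ requires at least one nonzero $w_h$ with $j(h)=j$, we conclude $|\supp(\bar P)|\le\#\{h:w_h>0\}\le(\sum_{i=1}^n|P_i|)-n+1$, which is exactly (\ref{sparseequ}).

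The step I expect to be the main obstacle is the reverse direction of the correspondence. The forward map is always well-defined, but recovering a $w$ from a $(z,y)$ is delicate when the data are not in general position: several tuples $s_h^*$ may share the same weighted mean $\x_j$ yet carry different costs $c_h$, so a single $z_j$ does not determine the $w_h$. The non-mass-splitting property is what resolves this, forcing each positive $z_j$ in an optimal solution to arise from a single tuple; care is needed to confirm that this tuple is a cost-minimizing one, so that optimality is genuinely preserved under the decomposition.
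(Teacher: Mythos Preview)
Your proposal is correct and matches the paper's approach: the paper merely asserts the correspondence between LPs (\ref{baryLP}) and (\ref{LPw}) and invokes \cite{abm-16,m-16}, and your rank argument on the incidence matrix of LP (\ref{LPw}) is exactly the sparsity proof those references supply. Your flagged concern about the reverse map dissolves once you observe that optimality of $(z,y)$ forces each $\x_j$ with $z_j>0$ to equal the weighted mean of its non-mass-splitting destinations (this is the argument recalled in Section~\ref{sec:Intro}), so the distinguished tuple $s_h^*$ automatically has $j(h)=j$ and per-unit cost $c_h=\sum_i\lambda_i\|\x_j-\x_{ik_i}\|^2$, whence the objective is preserved exactly---no separate ``cost-minimizing'' check is needed.
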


For a set of two discrete measures, the computation of a discrete barycenter can be modeled as a classical transportation problem; this is explained in \cite{m-16} through a transformation of LP (\ref{baryLP}). We demonstrate an easier way to prove this claim by reindexing LP (\ref{LPw}) as follows:

Denote the support elements as $\x_k \in \supp{(P_1)}$ with mass $d_k$ and $\x_l \in \supp{(P_2)}$ with mass $d_l$. Further, represent the weights as $\lambda_1=\lambda$ and $\lambda_2=1-\lambda$. For a given $\x_k$ and $\x_l$, an optimal barycenter support point is $\x=\lambda \x_k + (1-\lambda) \x_l$ and the corresponding cost of transport is \begin{align*} c_{kl}&=\lambda \|\x -\x_k\|^2 + (1-\lambda) \|\x -\x_l\|^2 = (\lambda (1-\lambda)^2+ \lambda^2(1-\lambda))\|\x_k -\x_l\|^2  =\lambda (1-\lambda)\| \x_k -\x_l\|^2. \end{align*}
With $w_{kl}$ denoting the mass associated to the combination of $\x_k$ and $\x_l$, the Discrete Barycenter Problem for $n=2$ can be written as the following transportation problem.
\begin{equation*}\label{LPtransport}
\begin{array}{crll}
\tag{transportation} \mathrm{min}  &\multicolumn{2}{l}{\sum\limits_{k=1}^{|P_1|}\sum\limits_{l=1}^{|P_2|} c_{kl} w_{kl}} \nonumber \\
\mathrm{s.t.  } &\sum\limits_{l=1}^{|P_2|} w_{kl}=& \spc d_{k}, &\forall k=1,\ldots,|P_1	|\\ 
&\sum\limits_{k=1}^{|P_1|} w_{kl} =& \spc d_{l}, &\forall l=1,\ldots,|P_2| \nonumber\\
&w_{kl}  \geq& \spc 0, &\forall k=1,\ldots,|P_1|, \spc\forall l=1,\ldots,|P_2|\nonumber
\end{array}
\end{equation*}

It is well-known that transportation problems can be solved in strongly polynomial time: Transportation problems have totally-unimodular constraint matrices and are a special case of minimum-cost flow problems \cite{o-93,t-86}. There are various specialized algorithms for transportation problems \cite{b-08,ks-95,tn-95}; see \cite{ms-16} for a survey. The best running time for our setting is due to \cite{ks-95}, with corresponding bound stated in Theorem \ref{thm:transport} in our notation.
\begin{theorem}\label{thm:transport}
Consider the Discrete Barycenter Problem for $n=2$ with $p_1=|P_1|$,  $p_2=|P_2|$, and w.l.o.g. $p_1\geq p_2$. Then the problem can be solved in strongly polynomial time $$O(p_1 \log p_1 \; (p_1p_2+p_2\log p_2)).$$
\end{theorem}

\subsection{LP (\ref{LPhybrid}): a Hybrid Approach to Variable Introduction}\label{sec:hyb}
The strategies of variable introduction presented in LP (\ref{barymodLP}) and LP (\ref{LPw}) are not mutually exclusive. For each tuple $s_h^*$ individually, we can decide whether to use a representation with $y,z$-variables as in LP (\ref{barymodLP}) or a representation with $w$-variables as in LP (\ref{LPw}). To this end, we partition the set $\{1,\dots, |S^*|\}$ into two index sets $(S^*)^y$ and $(S^*)^w$ to indicate for which  $s_h^*$ we use which representation.

The original support points $\x_{ij}$ can then receive mass in two ways: through a transport denoted by some $y_{ijk}$ and through a fixed transport of a mass $w_h$. First, we define $(S^*)^w_{ij}$ to be the set of indices $h$ such that the $i^{th}$ component of $s_h^*$ is $\x_{ij}$. Then $(S^*)^w_{ij}\subset (S^*)^w$ corresponds precisely to those combinations $s_h^*$ which imply a fixed transport to $\x_{ij}$. For a formal definition, we only have to restrict $S_{ij}^*$ to indices $h \in (S^*)^w$, i.e.,
$$ (S^*)^w_{ij}=\{\; h \; : \;  s_h^*=(\dots,\x_{ij},\dots), h \in (S^*)^w\}.$$
For a proper indexing of the transport corresponding to $(S^*)^y$, we need index sets that mirror $S_{ij}$ and $S_k$ as defined in Section \ref{sec:ystart}, but restricted to $(S^*)^y$. $S^y_{ij}$ contains the indices $k$ of all $\x_k \in S$ produced by the weighted means of tuples in $(S^*)^y$ that contain $\x_{ij}$. Formally,
$$ S_{ij}^y=\{\; k \in S\; : \;  \x_k=\lambda_i\x_{ij} + \sum\limits_{l=1, l\neq i}^n \lambda_l \x_{lj'} \text{ for } s^*_h=(\x_{1j'},\dots, \x_{ij},\dots,\x_{nj'}) \in (S^*)^y \}.$$
Note that the index pair $(i,j)$ is fixed in the definition of $S_{ij}^y$. (The use of $j'$ in the other $\x_{lj'}$ indicates that the corresponding $j'$ do not have to match $j$.) Further, $S^y_k$ contains those index pairs $(i,j)$ for which $\x_k \in S$ is the weighted mean of a tuple in $(S^*)^y$ that contains $\x_{ij}$. This gives
$$ S^y_k=\{\; (i,j) \; : \;  \x_k=\lambda_i\x_{ij} + \sum\limits_{l=1, l\neq i}^n \lambda_l \x_{lj'} \text{ for some } s^*_h=(\x_{1j'},\dots,\x_{ij},\dots,\x_{nj'}) \in (S^*)^y\}.$$

Now, we are ready to state an LP that allows for the split of $\{1,\dots, |S^*|\}$ into index sets $(S^*)^y$ and $(S^*)^w$:
\begin{equation*}\label{LPhybrid}
\begin{array}{crll}
\tag{hybrid} \mathrm{min}  &\multicolumn{3}{l}{ \sum\limits_{h:h \in (S^*)^w}c_h w_h+\sum\limits_{i=1}^n\lambda_i\sum\limits_{j = 1}^{|P_i|}\sum\limits_{k:k \in S^y_{ij}}\|\x_k - \x_{ij} \|^2 y_{ijk}}  \\
\mathrm{s.t.} &\sum\limits_{j:(i,j) \in S^y_k} y_{ijk}  &= \spc\spc z_k,  &\forall i=1,\ldots,n,\spc\forall k \in S^y_{ij}\\
&\sum\limits_{h:h \in (S^*)^w_{ij}} w_h + \sum\limits_{k:k \in S^y_{ij}} y_{ijk}  &= \spc\spc d_{ij}, &\forall i=1,\ldots,n,\spc\forall j=1,\ldots,|P_i|\\
&w_h  &\geq \spc\spc 0, &\forall  h \in (S^*)^w \\
&y_{ijk}  &\geq \spc\spc0, &\forall i=1,\ldots,n,\spc\forall j=1,\ldots,|P_i|,\spc \forall k \in S^y_{ij}.  \\
\end{array}
\end{equation*}

Correctness of this model is a direct consequence of Theorems \ref{thm:6eq} and \ref{thm:7eq}.

\begin{theorem}\label{thm:8eq}
The Discrete Barycenter Problem can be solved using LP (\ref{LPhybrid}). Any optimal vertex corresponds to a sparse barycenter.
\end{theorem}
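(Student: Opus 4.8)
The plan is to read LP~(\ref{LPhybrid}) as the outcome of applying the fixed-transport elimination --- the very substitution $y_{ijk}=w_h$ that turns LP~(\ref{barymodLP}) into LP~(\ref{LPw}) --- selectively, on the tuples indexed by $(S^*)^w$ only, while keeping the $y$-representation of LP~(\ref{barymodLP}) on the tuples indexed by $(S^*)^y$. Since that substitution touches only the variables attached to a single tuple $s^*_h$, performing it on a subset of the tuples should again amount to a change of variables on the feasible region. Performing it on all tuples preserves every optimal solution by Theorem~\ref{thm:7eq}, and performing it on none recovers LP~(\ref{barymodLP}), whose optimal vertices are sparse barycenters by Theorem~\ref{thm:6eq}. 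I would therefore prove correctness by transporting these two theorems across the partial change of variables.

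First I would set up a cost-preserving map $\phi$ from feasible points of LP~(\ref{LPhybrid}) to feasible points of LP~(\ref{barymodLP}). Given $(w,y,z)$, for each $h\in(S^*)^w$ whose tuple $s^*_h$ produces the weighted mean $\x_j$ and carries $\x_{ik}$ in component $i$, I re-aggregate the fixed transport by adding $w_h$ into each such $y_{ijk}$ and into $z_j$, leaving the $(S^*)^y$-part of $y$ and $z$ unchanged. The mixed marginal constraints $\sum_{h\in(S^*)^w_{ik}}w_h+\sum_{j\in S^y_{ik}}y_{ijk}=d_{ik}$ of LP~(\ref{LPhybrid}) then become exactly the marginal constraints of LP~(\ref{barymodLP}), and the cost is unchanged because $c_h=\sum_{i=1}^n\lambda_i\|\x_j-\x_{ik}\|^2$ is precisely the $y$-cost of routing $w_h$ through its tuple. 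Conversely, any optimal vertex of LP~(\ref{barymodLP}) is, by Theorem~\ref{thm:6eq}, a sparse non-mass-splitting barycenter, so each supported $\x_j$ realizes a single tuple $s^*_h$ with weighted mean $\x_j$; assigning its mass to $w_h$ when $h\in(S^*)^w$ and leaving it in $y,z$ when $h\in(S^*)^y$ (the two sets partition all tuples, so every supported mean is covered) yields a feasible point of LP~(\ref{LPhybrid}) of equal cost. Combining the two directions shows that the optimal value of LP~(\ref{LPhybrid}) equals the barycenter cost and that $\phi$ sends every optimal point of LP~(\ref{LPhybrid}) to an optimal point of LP~(\ref{barymodLP}), i.e.\ to a barycenter; this establishes the first sentence of the theorem.

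For the second sentence I would transfer the sparsity bound through $\phi$. The cleanest statement is that $\phi$ restricts to a correspondence between optimal vertices of the two programs, so that the bound (\ref{sparseequ}) follows from Theorem~\ref{thm:6eq} (and, on the $(S^*)^w$ side, Theorem~\ref{thm:7eq}). In general position $|S^*|=|S|$, each tuple is the unique realization of its weighted mean, and $\phi$ is a relabeling $w_h\leftrightarrow z_j$ together with the induced $y$-entries, so vertices correspond to vertices verbatim and (\ref{sparseequ}) is immediate.

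The main obstacle is the non-general-position case, where several tuples $s^*_h$ produce the same $\x_j$. A vertex of LP~(\ref{LPhybrid}) may then carry positive mass on two such tuples $h_1,h_2\in(S^*)^w$ at once --- their marginal columns differ, so this is compatible with extremality --- and $\phi$ aggregates both into a single $\tilde z_j$; the image can be mass-splitting and need not be a vertex of LP~(\ref{barymodLP}), so the naive vertex bijection breaks and I cannot simply quote Theorem~\ref{thm:6eq}. The resolution I would pursue is to count distinct weighted means rather than tuples: the support of the reconstructed barycenter is the set of $\x_j$ with $\tilde z_j>0$, and I would bound its cardinality by redoing, for the constraint matrix of LP~(\ref{LPhybrid}), the basic-solution rank argument that yields (\ref{sparseequ}) for LP~(\ref{baryLP}) and LP~(\ref{barymodLP}) --- using that the fixed-transport columns of the $w$-variables and the $y$-columns together carry the same transportation-type incidence structure, with its $n$ redundancies, that caps the number of positive barycenter masses at $(\sum_{i=1}^n|P_i|)-n+1$. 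Establishing this bound uniformly across both variable types, in the presence of collapsing tuples, is the step I expect to require genuine care.
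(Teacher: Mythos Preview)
Your approach is in the same spirit as the paper's, but far more detailed: the paper's entire proof of Theorem~\ref{thm:8eq} is the single sentence ``Correctness of this model is a direct consequence of Theorems~\ref{thm:6eq} and~\ref{thm:7eq}.'' There is no explicit map $\phi$, no discussion of the converse direction, and no separate treatment of the sparsity claim. So your first two paragraphs are a correct and careful unpacking of what the paper leaves implicit.

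Your third and fourth paragraphs go beyond the paper. The obstacle you raise --- that in non-general position a vertex of LP~(\ref{LPhybrid}) may have several $w_{h_1},w_{h_2}>0$ for tuples realizing the same $\x_j$, so that $\phi$ need not send vertices to vertices --- is a genuine subtlety that the paper simply does not address. The paper's one-line justification does not explain why the vertex-to-vertex correspondence survives the partial substitution, and your proposed remedy (redoing the rank count directly on the constraint matrix of LP~(\ref{LPhybrid})) is the natural way to close this. In that sense you have identified a gap in the paper's argument rather than in your own; your plan to bound the number of distinct weighted means via the basic-solution rank, exploiting the $n$ redundancies in the marginal block shared by both the $w$- and $y$-columns, is the right route and should go through with the care you anticipate.
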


In Section \ref{sec:besthybrid}, we show computations for an example where LP (\ref{LPhybrid}) is much smaller than the other `pure' LP formulations.

\section{Model Sizes}\label{sec:theory}

Next, we study the advantages of LPs (\ref{barymodLP}), (\ref{LPw}), and (\ref{LPhybrid}) over LP  (\ref{baryLP}) in model size, i.e., the number of variables and constraints, in two representative settings. We begin with data in general position and then turn to highly structured data, in which support is on a regular $d$-dimensional grid. 
\subsection{Data in General Position}\label{sec:datageneral}

For a simple notation, we assume that all $P_i$ have the same number of support points $|P_i|= p_{\max}$. Recall that data in general position occurs when all $(p_{\max})^n$ combinations of support points in the original measures produce $(p_{\max})^n$ different weighted means $\x_k$. Thus $|S|=|S^*|=(p_{\max})^n$.

LP (\ref{baryLP}) is set up with a variable $z_k$ for each of support point in $\x_k\in S$, as well as variables $y_{ijk}$ that indicate transport from $\x_k$ to each $\x_{ij}$. The result is an LP with $(p_{\max})^n+n(p_{\max})^{n+1}$ variables, $(p_{\max})^n$ of type $z_k$ and $n(p_{\max})^{n+1}$ of type $y_{ijk}$, and $n (p_{\max})^n+n p_{\max}$ constraints; recall the discussion at the end of Section \ref{sec:sec13}. All of the other formulations improve significantly on these numbers; the results are summarized in Table \ref{table:pmax}.

First, we turn to LP (\ref{barymodLP}). Because of the general position of the data, each $\x_k$ may transport only to exactly those $n$ points $\x_{ij}$ from which it is constructed. Thus, we reduce the number of variables $y_{ijk}$ for each $\x_k$ from $np_{\max}$ to $n$. The number of variables $z_k$ and the number of constraints remain unaffected. This gives a total of $(1+n)(p_{\max})^n$ variables. A representation of the reduction of the number of variables as a percentage highlights the improvement over LP (\ref{baryLP}). It is a function dominated by $p_{\max}$:
\[ 1-\dfrac{(1+n)(p_{\max})^n}{(p_{\max})^n+n(p_{\max})^{n+1}}=1-\dfrac{1+n}{1+n p_{\max}}= \dfrac{n(p_{\max}-1)}{1+n p_{\max}} \approx \frac{(p_{\max}-1)}{ p_{\max}}. \]

For example, for $ p_{\max}=256$ the fraction indicates about a $99.61\%$ reduction. 


Moving on to LP (\ref{LPw}), LP (\ref{LPw}) is a strict improvement over LP (\ref{barymodLP}). Informally, all of the $y$-variables from LP (\ref{baryLP}) and LP (\ref{barymodLP}) are eliminated. There is just a single variable $w_h$ for each $\x_k \in S$ with the index $h$ indicating the corresponding tuple in $S^*$. This further lowers the total number of variables  from $(1+n)(p_{\max})^n$ in LP (\ref{barymodLP}) to $(p_{\max})^n$ in LP (\ref{LPw}). 

A similar analysis shows that the reduction in the number of variables from LP (\ref{baryLP}) to LP (\ref{LPw}) is about $99.9\%$ for $n=4$ and $p_{\max} = 256$. The percentage reduction between the number of variables in LP (\ref{barymodLP}) and LP (\ref{LPw}) depends only on $n$ and gets arbitrary close to $100\%$ for increasing $n$; for $n=4$, it is already $80\%$.

\begin{table}[b]
\begin{center}
\begin{tabular}{|c|c|c|}\hline
LP formulation & Variables & Constraints \\ \hline
(\ref{baryLP}) & $n(p_{\max})^{n+1}+(p_{\max})^n$ & $n (p_{\max})^n+n p_{\max}$ \\ \hline
(\ref{barymodLP}) & $(1+n)(p_{\max})^n$ & $n (p_{\max})^n+n p_{\max}$ \\ \hline
(\ref{LPw}) & $(p_{\max})^n$ & $n p_{\max}$ \\ \hline
(\ref{LPhybrid}) & $(p_{\max})^n$ \text{ to } $(1+n)(p_{\max})^n$  & $n p_{\max}$ \text{ to } $n (p_{\max})^n+n p_{\max}$  \\ \hline
\end{tabular}
\end{center}
\caption{The number of variables and constraints of the different LP models for data in general position.} \label{table:pmax}
\end{table}

It is worth noting that, unlike LP (\ref{barymodLP}), LP (\ref{LPw}) also reduces the number of constraints from  $n(p_{\max})^{n}+np_{\max}$ in LPs  (\ref{baryLP}) and (\ref{barymodLP}) to  $n p_{\max}$: from exponential in $n$ to linear in $n$. This can be significant for the solution time, especially in cases in which the number of required variables is comparable between different formulations. For data in general position, LP (\ref{LPw}) always is the best model, by a significant margin. 

Finally, consider LP (\ref{LPhybrid}). For each of the $(p_{\max})^n$ different combinations of original support points, which here correspond to $(p_{\max})^n$ different weighted means   $\x_k$, we decide whether to introduce $w$-variables as in LP (\ref{LPw}) or $y, z$-variables as in LP (\ref{barymodLP}). Thus, the number of variables and constraints ranges between the values for  LP (\ref{LPw}) and LP (\ref{barymodLP}). As seen in the above, a best decision is to always introduce $w$-variables, which produces exactly LP (\ref{LPw}). 

\begin{theorem}\label{thm:hybisgen}
For data in general position, choosing $(S^*)^w=S^*$ and $(S^*)^y=\emptyset$ gives a minimal number of variables and constraints in LP (\ref{LPhybrid}). Then LP (\ref{LPhybrid}) and LP (\ref{LPw}) are identical.
\end{theorem}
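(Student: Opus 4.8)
The plan is to show that the choice $(S^*)^w = S^*$, $(S^*)^y = \emptyset$ is optimal by a componentwise comparison of the contribution each tuple $s_h^*$ makes to the variable and constraint counts of LP (\ref{LPhybrid}). The key observation, which I would state first, is that the partition of $\{1,\dots,|S^*|\}$ into $(S^*)^w$ and $(S^*)^y$ decouples across tuples: the objective and the second family of constraints both sum independently over the $w$-part and the $y$-part, so the total size of LP (\ref{LPhybrid}) is a sum of per-tuple costs. Hence it suffices to argue that, for each individual $s_h^*$, assigning it to $(S^*)^w$ never costs more (in either variables or constraints) than assigning it to $(S^*)^y$, and that the data being in general position makes this comparison strict in the relevant direction.

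Next I would carry out the per-tuple bookkeeping. Under general position, $|S^*| = |S|$ and the map $s_h^* \mapsto \x_j$ is a bijection, so each tuple corresponds to exactly one weighted mean $\x_j$ with $|S_j| = n$. If $s_h^*$ is placed in $(S^*)^w$, it contributes a single variable $w_h$ and no new constraint (it only adds terms to the already-present demand constraints $\sum_{h \in (S^*)^w_{ik}} w_h + \dots = d_{ik}$). If instead $s_h^*$ is placed in $(S^*)^y$, then the corresponding $\x_j$ forces one variable $z_j$ together with $n$ transport variables $y_{ijk}$ (one for each $(i,k) \in S^y_j$, and $|S^y_j| = n$ here), plus the coupling constraint $\sum_{(i,k)\in S^y_j} y_{ijk} = z_j$. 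I would tabulate this as: the $w$-representation uses $1$ variable and $0$ extra constraints per tuple, whereas the $y$-representation uses $n+1$ variables and $1$ extra constraint per tuple. Summing over all $|S^*| = (p_{\max})^n$ tuples recovers the global counts already derived in the preceding analysis, confirming the arithmetic.

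Since $1 \le n+1$ and $0 \le 1$ hold for every tuple, moving any tuple from $(S^*)^y$ to $(S^*)^w$ weakly decreases both the variable count and the constraint count; with $n \ge 1$ the variable inequality is strict whenever $(S^*)^y \neq \emptyset$. Therefore the all-$w$ choice simultaneously minimizes variables and constraints, and substituting $(S^*)^w = S^*$, $(S^*)^y = \emptyset$ into LP (\ref{LPhybrid}) deletes the entire $y,z$ block and its coupling constraints, leaving precisely the objective $\sum_{h=1}^{|S^*|} c_h w_h$ and the demand constraints $\sum_{h \in S^*_{ik}} w_h = d_{ik}$ of LP (\ref{LPw}); this establishes the claimed identity of the two programs.

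\emph{Main obstacle.} The only delicate point is justifying that the two objectives in LP (\ref{LPhybrid}) do not ``overlap'' across the partition, so that the size genuinely decomposes tuple-by-tuple and no shared variable is double-counted. Concretely, I would need to verify that under general position distinct tuples generate distinct $\x_j$ (hence distinct $z_j$ and disjoint blocks of $y_{ijk}$), so that reassigning one tuple cannot create a variable already introduced by another. Without general position this fails — several $s_h^*$ may collapse to one $\x_j$, letting a single $z_j$ amortize the cost of many tuples — which is exactly why the theorem is stated only for data in general position and why the elementary per-tuple argument is valid here.
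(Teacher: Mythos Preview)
Your proposal is correct and follows essentially the same per-tuple comparison the paper uses (the paper's entire argument is the sentence ``With the same arguments as above, it is best to always introduce $w$ variables, which produces exactly LP (\ref{LPw})''), only spelled out more carefully. One minor bookkeeping slip: placing a tuple in $(S^*)^y$ actually generates $n$ coupling constraints $\sum_{k:(i,k)\in S^y_j} y_{ijk}=z_j$ (one for each measure $i$), not a single one---this is why the paper's constraint count for LP (\ref{barymodLP}) is $n(p_{\max})^n+np_{\max}$---but since $0\le n$ your inequality and conclusion are unaffected.
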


\subsection{Grid-structured Data}\label{sec:modgrid}

LPs (\ref{baryLP}), (\ref{barymodLP}), and (\ref{LPhybrid}) have a significant advantage over LP (\ref{LPw}) in many practical applications: They are able to take advantage of repetition in the $\x_k$ constructed from different combinations $s_h^*$ of original support points. Recall that LPs (\ref{baryLP}) and  (\ref{barymodLP}) have variables $y$ and $z$ both indexed on $k = 1, \ldots, |S|$, where $|S|$ is the number of {\em distinct} weighted means. By contrast, LP (\ref{LPw}) introduces variables $w$ indexed on $h = 1, \ldots, |S^*|$. When there is heavy repetition of the $\x_k$, $|S^*|$ is much larger than $|S|$.

Here, we discuss the model sizes for (evenly weighted) measures supported on a $d$-dimensional regular grid $G_{\text{org}}$. This is the most common setting in optimal mass transport problems. The MNIST data set of handwritten digits is a prime example for such data; see \cite{lbh-98} for more information. The handwritten digits are recorded as a measure supported on a subset of a $16 \times 16$ grid. Scaling the masses to sum to one makes this a probability measure; see Figure \ref{fig:example8} (left) for an example. The shades of grey indicate the amount of mass at the support points; darker points hold more mass.

\begin{figure}[t]
\begin{center}
\includegraphics[scale = .35]{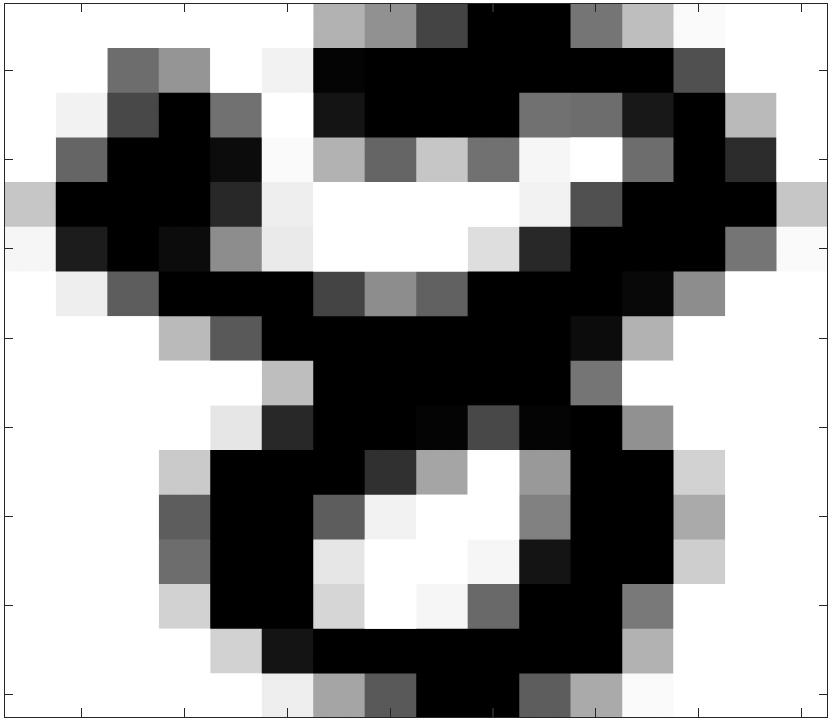}
\includegraphics[scale = .35]{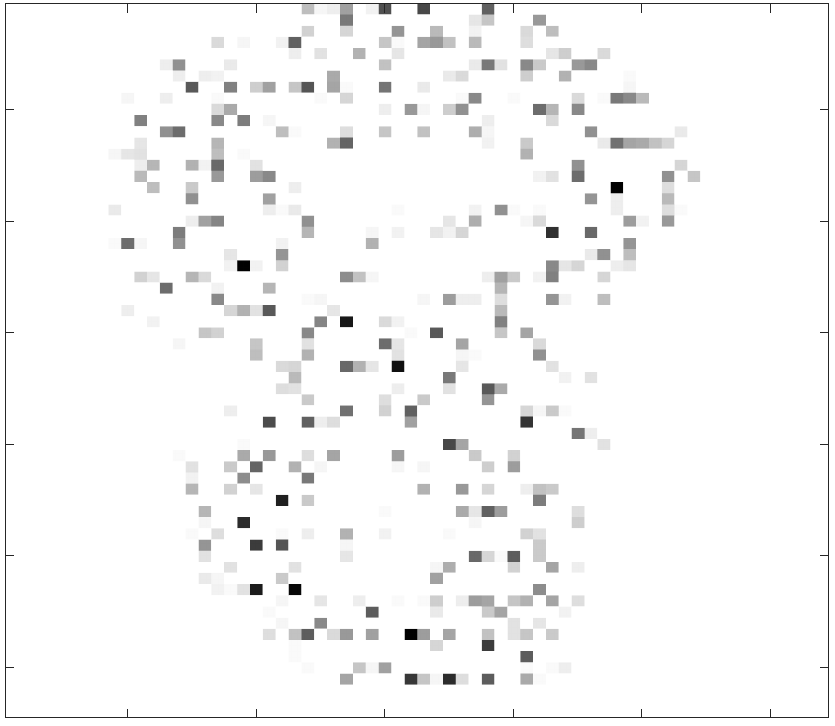}
\end{center}
\caption{Left, an example of digit 8 from the MNIST digits data set. Right, a barycenter computed for four 8's from the data set. It is supported on a four-times-finer grid. }\label{fig:example8}
\end{figure}

We focus our comparison of model sizes on LPs (\ref{baryLP}) and (\ref{LPw}). While both LPs (\ref{barymodLP}) and (\ref{LPhybrid}) provide further improvements on LP (\ref{baryLP}), they come at significant cost: either expert knowledge on the input or computationally expensive preprocessing are required. We move this discussion to Section \ref{sec:apriori}.

So assume that all measures $P_i$ are supported on a $d$-dimensional regular grid $G_{\text{org}}$ of integer step sizes in each direction, each coordinate going from $1$ to $K$. Because the measures are evenly weighted, i.e., $\lambda_i = \frac{1}{n}$ for all $i=1,\dots,n$, $S$ is contained in a $d$-dimensional, regular grid $G_{\text{full}}$ that is $n$-times-finer than the original grid in each of the $d$ directions. We get $|S| \leq |G_{\text{full}}|= (nK-n+1)^d$, as the finer grid only runs between the boundary points. In Figure \ref{fig:example8} (right), we depict a barycenter computed for four evenly weighted digits. It is supported sparsely on a regular, four-times-finer grid; the dimension is $61 \times 61$ instead of $16 \times 16$. (Note $61=4 \cdot 16 - 3$.)

Recall that LP (\ref{baryLP}) uses a variable $z_k$ for each possible support point $\x_k\in S$, and variables $y_{ijk}$ that allow transport from $\x_k$ to each $\x_{ij}$. The number of variables is $$(nK-n+1)^d (1+\sum_{i=1}^n |P_i| )\leq (nK-n+1)^d (1+nK^d) \, \in \, O(n^{d+1}K^{2d})$$ and the number of constraints is $$nK^d +n(nK-n+1)^d \, \in \, O(n^{d+1}K^d).$$ Both the number of variables and the number of constraints are polynomial in $n$ with degree $d+1$. In fact, it now is not hard to see that the Discrete Barycenter Problem with grid-structured data, even weights $\lambda_i$, and input information (coordinates of support points, $K$) on the grid, can be solved in strongly polynomial number by using $G_{\text{full}}$ in place of $S$ in the formulation of LP  (\ref{baryLP}). The proof is similar to the proof of existence of a strongly polynomial $2$-approximation for the Discrete Barycenter Problem in \cite{b-17}.

\begin{theorem}\label{thm:stronggrid}
For all rational data and fixed dimension $d$, the Discrete Barycenter Problem for data supported on a grid $G_{\text{org}}$, $\lambda_i = \frac{1}{n} $ for all $i=1,\dots,n$, and input on $G_{\text{org}}$ itself, can be solved in strongly polynomial time in $n$ and $K$ using LP (\ref{baryLP}), by using  $G_{\text{full}}$ in place of $S$.
\end{theorem}

\begin{proof}{Proof.}
We have to show that setup and solution of LP (\ref{baryLP}) using $G_{\text{full}}$ is strongly polynomial, i.e., it is possible in a polynomial number of arithmetic operations in the number of integers in the input and by performing only computations on numbers that are polynomial in the input.

In the above, we have seen that the problem has a strongly polynomial number of variables and constraints for fixed $d$. The only numbers in the model that might not be explicitly stated in the input are $\|\x_k - \x_{ij} \|^2$. The $ \x_{ij}$ lie in $G_{\text{org}}$, the $\x_k$ in $G_{\text{full}}$; both can be computed efficiently from input on $G_{\text{org}}$. Then each of the strongly polynomially many terms $\|\x_k - \x_{ij} \|^2=(\x_k - \x_{ij})^T(\x_k - \x_{ij})$ only requires the computation of the vector difference $\x_k - \x_{ij}$ and the scalar product of the resulting vector with itself.  Thus, the setup and the size of the LP indeed are strongly polynomial.

Now, recall that linear programs are generally solvable in weakly polynomial time. This indicates that the absolutes of numbers in the input are connected to the number of required arithmetic operations. However, a famous result by Tardos \cite{t-86} states that it suffices to only consider numbers in the constraint matrix, and not in the objective function or right-hand sides. The constraint matrix for LP (\ref{baryLP}) consists only of $0$'s and $1$'s. This implies that the LP can be solved in strongly polynomial time.\eoproof
\end{proof}

 By contrast, recall LP (\ref{LPw}) has up to $(p_{\max})^n$ variables (but only $n p_{\max}$ constraints). If the actual support of the measures is a linear fraction of the number $K^d$ of support points in the whole grid (we observe factors between $\tau=\frac{1}{5}$ and $\tau=\frac{1}{3}$ for the MNIST digits data set) then one obtains an exponential growth in $\Theta((\tau K^d)^n)$. In summary, LP (\ref{baryLP}) is of polynomial size in $n$ for fixed dimension $d$, while LP (\ref{LPw}) is of exponential size in $n$, even if $d$ is fixed. See Table \ref{tab:gridVC} for the numbers for a worst-case scenario, in which each grid point has nonzero mass.

\begin{table}[t]
\begin{center}
\begin{tabular}{|c|c|c|}\hline
LP formulation & Variables & Constraints \\ \hline
(\ref{baryLP}) & $(nK-n+1)^d(1+nK^d)$ & $nK^d+n(nK-n+1)^d$ \\ \hline
(\ref{LPw}) & $(K^d)^n$ & $nK^d$ \\ \hline
\end{tabular}
\end{center}
\caption{The number of variables and constraints for $n$ measures supported densely on a grid of length $K$.}\label{tab:gridVC}
\end{table}

Both LP (\ref{barymodLP}) and (\ref{LPhybrid}) scale polynomially in $n$, too: By Lemma \ref{lem:lessy}, LP (\ref{barymodLP}) always is a strict improvement over LP (\ref{baryLP}) in model size. An even smaller model size can be achieved using LP (\ref{LPhybrid}) through an optimal choice of $(S^*)^y$ and $(S^*)^w$. However, computationally expensive preprocessing is required to set up both of these models. In the next section, we discuss the hardness of this preprocessing in general, as well as some efficient, but less powerful preprocessing routines tailored to grid data. 

\section{Preprocessing}\label{sec:process}

We now analyze the preprocessing that is necessary for each LP from Section \ref{sec:Form}. First, we prove that the setup of all LPs is hard (and thus computationally expensive), even if the resulting LPs are of polynomial size. In fact, a small step of the setup for LPs  (\ref{barymodLP}) and (\ref{LPhybrid}), which may even have to be repeated exponentially many times, already is hard.

Second, we turn to evenly weighted measures supported on a regular grid (as in Section \ref{sec:modgrid}). Here, the possible support set $S$ is of polynomial size, in contrast to the exponential size of $S^*$. Our goal is to reduce the model sizes {\em without} processing $S^*$. LP  (\ref{baryLP}) scales polynomially for fixed dimension $d$ in this setting (c.f.  Table \ref{tab:gridVC}). Thus, the preprocessing has to be fast, while still providing a significant improvement. To achieve this, we present preprocessing routines that use {\em only the grid structure} and simple properties derived from measures $P_1, \ldots, P_n$ to improve computations. The positive impact on computation times is highlighted in Section \ref{sec:comp}.

\subsection{Hardness of General Preprocessing}\label{sec:hard}

If there is no expert knowledge on structure underlying a set of measures $P_1, \ldots, P_n$ (or they are in general position), we first have to process the set $S^*$ in order to set up any of the linear programming formulations. In particular, this is necessary for determining the set $S$, as well as the sets $S_k$ and $S_{ij}$ required for LPs  (\ref{barymodLP}) and (\ref{LPhybrid}). However, the exponential size of $S^*$ is not the only reason why this preprocessing is computationally expensive. We begin by proving that it is already NP-hard to decide whether a given  $\x$ lies in $S$ or not. 

\begin{lemma}\label{lem:goodLPishard}
Let $P_1,\dots,P_n \subset \mathbb{R}^d$ be discrete measures, let $\lambda\in \R^n_{+}$ be a weight vector with $\sum_{i=1}^n \lambda_i=1$, and let $\x\in \mathbb{R}^d$. Then it is NP-hard to decide whether $\x\in S$, even for $d=1$.
\end{lemma}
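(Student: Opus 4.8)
The plan is to give a polynomial-time many-one reduction from \textsc{Subset Sum} (equivalently \textsc{Partition}), one of the classical NP-complete problems. The key observation is that the set $S$, and hence membership $\x\in S$, depends only on the support sets $\supp(P_i)$ and the weights $\lambda_i$, not on the masses $d_{ik}$: deciding $\x\in S$ amounts to deciding whether one can select a single support point $a_i\in\supp(P_i)$ from each measure so that $\sum_{i=1}^n \lambda_i a_i = \x$. In dimension $d=1$ this is exactly a weighted selection problem over integers, which is the natural home for a subset-sum argument.

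Concretely, given a \textsc{Subset Sum} instance consisting of positive integers $b_1,\dots,b_m$ and a target $t$, I would set $n=m$, take each measure $P_i$ to be supported on the two points $\supp(P_i)=\{0,b_i\}\subset\R$ (with, say, uniform masses $d_{i1}=d_{i2}=\tfrac12$, which are irrelevant to $S$), and fix the weights $\lambda_i=\tfrac1n$. Then every element of $S$ has the form $\tfrac1n\sum_{i\in I} b_i$ for some index set $I\subseteq\{1,\dots,n\}$, where $i\in I$ records the choice of $b_i$ over $0$ in the $i$-th measure. Querying the point $\x=t/n$ therefore yields $\x\in S$ if and only if there is a subset $I$ with $\sum_{i\in I} b_i = t$, i.e. if and only if the \textsc{Subset Sum} instance is a yes-instance.

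The next step is to check that the reduction runs in polynomial time: the numbers involved ($b_i$, $t$, and $\x=t/n$ encoded as a fraction) are only polynomially larger than the input, the construction uses $n=m$ measures with two support points each, all weights and masses are trivial to write down, and the instance stays in $d=1$, so hardness holds in the stated low-dimensional regime. Since \textsc{Subset Sum} is NP-complete, this establishes that deciding $\x\in S$ is NP-hard.

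I do not expect a serious obstacle; the entire content of the argument is recognizing that the weighted-mean membership question is a (weighted) subset-sum in disguise, with the binary choice $\{0,b_i\}$ in each measure encoding inclusion or exclusion of $b_i$. The only points requiring a little care are (i) confirming that the masses play no role so that we are free to assign them arbitrarily, and (ii) noting that this is \emph{weak} NP-hardness inherited from \textsc{Subset Sum} -- the produced integers can be large -- which is precisely what the ``NP-hard'' claim asserts and is consistent with the polynomial-size-but-hard-to-construct phenomenon for the sets $S_j$ discussed just before the statement.
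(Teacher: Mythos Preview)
Your proposal is correct and takes essentially the same approach as the paper: a reduction from \textsc{Subset Sum} with $n$ two-point measures $\supp(P_i)=\{0,b_i\}$, equal weights $\lambda_i=1/n$, and target $\x=t/n$. The paper's proof is terser (omitting your remarks on polynomial-time computability and weak versus strong NP-hardness), but the construction and the equivalence argument are identical.
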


\begin{proof}{Proof.}
For convenience, we call the decision whether $\x\in S$ for $d=1$ the {\em possible support problem}. We prove the claim by showing that an efficient decision of the possible support problem would lead to an efficient decision of the subset sum problem, which is known to be NP-complete. A variant of the subset sum problem can be stated as follows: given a set of non-zero integers $p_1,\dots,p_n\in \mathbb{Z}$, is there a subset whose sum is a given $s\in \mathbb{Z}$? (We allow for non-proper subsets, the empty set or the whole set, in this formulation to make the following arguments a bit less technical.)

Consider an instance $\mathcal{I}_s$ of the subset sum problem. We construct an instance $\mathcal{I}_p$ of the possible support problem from it as follows: Let $P_1,\dots,P_n \subset \mathbb{Z}$ be discrete measures, where each $P_i$ consists of the two support points $p_i$ and $0$ (both of mass $\frac{1}{2}$, but this is not relevant). Further, let $\lambda_1,\dots,\lambda_n=\frac{1}{n}$ and $\x=\frac{1}{n}\cdot s$. Clearly, this construction is polynomial. It remains to prove that $\mathcal{I}_s$ is a yes-instance if and only if $\mathcal{I}_p$ is a yes-instance. 

If $\mathcal{I}_p$ is a yes-instance, then $\x=\frac{1}{n}\cdot s= \sum_{i=1}^n \frac{1}{n} \x_{ij}$, where $\x_{ij}$ is either $p_i$ or $0$ for all $i\leq n$. Equivalently $s = n\cdot \x= \sum_{i=1}^n \x_{ij}$, where $\x_{ij}$ is either $p_i$ or $0$ for all $i\leq n$. Thus there exists a subset of the $p_1,\dots,p_n$ adding up to $s$, so $\mathcal{I}_s$ is a yes-instance, too. 

Conversely, let $\mathcal{I}_s$ be a yes-instance, i.e., there exists a set of indices $I\subset \{1,\dots,n\}$ such that $s=\sum\limits_{i\in I} p_i$. Then $\mathcal{I}_p$ is a yes-instance, too, as $\x=\frac{1}{n}\cdot s= \sum_{i=1}^n \frac{1}{n} \x_{ij}$ for $\x_{ij}=p_i$ for all $i\in I$, and $0$ else. This proves the claim.\eoproof
\end{proof}

Lemma \ref{lem:goodLPishard} has several implications. First, it transfers to the set $S$ as a whole.

\begin{corollary}\label{cor:goodLPishard2}
Let $P_1,\dots,P_n \subset \mathbb{R}^d$ be discrete measures and let $\lambda\in \R^n_{+}$ be a weight vector with $\sum_{i=1}^n \lambda_i=1$. Then it is NP-hard to decide whether a given set $S\subset \mathbb{R}^d$ is the set of possible support points.
\end{corollary}

This implies that it is (computationally) hard to construct $S$ from the input, {\em even if it is of polynomial size}. Further, Lemma \ref{lem:goodLPishard} transfers to the hardness of constructing $S_k$ for a fixed $\x_k\in S$. 
\begin{corollary}\label{cor:goodLPishard}
Let $P_1,\dots,P_n \subset \mathbb{R}^d$ be discrete measures, let $\lambda\in \R^n_{+}$ be a weight vector with $\sum_{i=1}^n \lambda_i=1$, and let $\x_k\in S$. Then it is NP-hard to decide whether a given set $S_k$ is correct.
\end{corollary}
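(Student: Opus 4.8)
The plan is to deduce Corollary~\ref{cor:goodLPishard} directly from Lemma~\ref{lem:goodLPishard} by exhibiting that a polynomial-time construction of the set $S_j$ would yield a polynomial-time decision procedure for membership in $S$, contradicting the just-established NP-hardness of the latter. Concretely, I would argue that being able to build $S_j$ for an arbitrary prescribed $\x_j\in\mathbb{R}^d$ lets us test whether $\x_j$ actually lies in $S$: by its very definition, $S_j$ is nonempty if and only if $\x_j$ can be written as a weighted mean $\sum_{i=1}^n \lambda_i \x_{ik}$ of support points, i.e. if and only if $\x_j\in S$. So the decision problem of Lemma~\ref{lem:goodLPishard} reduces to constructing $S_j$ and then performing the trivial (constant-time) emptiness check.

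The key subtlety I would flag is a matter of framing: the corollary as stated takes $\x_j\in S$ as a hypothesis, which makes the reduction appear vacuous since in that case $S_j$ is automatically nonempty. The honest way to carry out the proof is to observe that the hardness is really about the \emph{candidate} point: we are handed a vector $\x$ and asked to compute the set of index tuples realizing it as a weighted mean, \emph{without} being told in advance whether $\x\in S$. I would therefore phrase the reduction so that an algorithm constructing $S_{\x}:=\{(i,k):\x=\lambda_i\x_{ik}+\sum_{l\neq i}\lambda_l\x_{lk'}\text{ for some }\x_{lk'}\}$ for a general input $\x$ immediately decides the membership question of Lemma~\ref{lem:goodLPishard}, since $S_{\x}=\emptyset$ exactly when $\x\notin S$. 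If one insists on the literal promise $\x_j\in S$, the statement can instead be read as: no algorithm can construct $S_j$ in time polynomial in the input size and $|S_j|$ in general, because doing so for the subset-sum instances of Lemma~\ref{lem:goodLPishard} would recover the subset(s) of $\{p_1,\dots,p_n\}$ summing to $n\cdot x$, solving the search version of an NP-complete problem.

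The main obstacle is thus not any computation but pinning down the precise formal reading that makes the corollary both true and nonvacuous. Once that is settled, the argument is a one-line reduction: I would invoke Lemma~\ref{lem:goodLPishard} to fix the hard instances $P_i=\{p_i,0\}$ with $\lambda_i=\tfrac1n$, note that the subset-sum answer for target $n\cdot x$ is encoded entirely in whether $S_{\x}$ is empty (and, for the search version, in which tuples $S_{\x}$ contains), and conclude that any polynomial-time routine for constructing $S_j$ would decide subset sum in polynomial time. Since subset sum is NP-complete, constructing $S_j$ is NP-hard, which is exactly the claimed consequence.
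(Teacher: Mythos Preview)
Your proposal is correct and follows essentially the same approach as the paper: the corollary is obtained as a direct consequence of Lemma~\ref{lem:goodLPishard} via a one-line reduction. The only minor difference is in how the informal phrase ``constructing $S_j$'' is formalized: the paper chooses to read it as the decision problem ``given a candidate set, decide whether it equals $S_j$'' (so that handing in $\emptyset$ as the candidate recovers the membership question of the lemma), whereas you opt for the emptiness-check or search-version readings; these are equivalent for the purpose at hand, and you correctly flag the subtlety that the literal hypothesis $\x_j\in S$ must be relaxed or reinterpreted for the reduction to be nonvacuous.
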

Recall that $S_k$ contains the pairs $(i,j)$ of the $\x_{ij}$ from which a particular $\x_k$ can be constructed. The ability to construct $S_k$ for a given $\x_k$ in polynomial time would be sufficient to decide whether a given $S_k$ is correct in polynomial time, too -- a contradiction to Corollary \ref{cor:goodLPishard}, unless $P=NP$. 

\subsection{Preprocessing for Grid-structured Data}\label{sec:apriori}

The main benefit of evenly weighted grid-structured data, i.e., $n$ measures supported in a regular grid $G_{\text{org}}$ and $\lambda_i=\frac{1}{n}$, is that the set $S$ is contained in an $n$-times-finer grid $G_{\text{full}}$. This allows the efficient setup and solution of the Discrete Barycenter Problem, by using $G_{\text{full}}$ instead of $S$ in LP (\ref{baryLP}); recall Theorem \ref{thm:stronggrid} and see Table \ref{tab:gridVC} for exact model sizes. However, this approach can still be improved significantly. To this end, an exact construction of LPs  (\ref{barymodLP}) and (\ref{LPhybrid}) may be impractical: It requires the processing of the exponential-sized set $S^*$, and each step of this process is hard by itself (Lemma \ref{lem:goodLPishard}). 

In this section, we have a different goal. We devise some efficient preprocessing routines, only using the grid structure and simple information on $P_1, \ldots, P_n$, to reduce the model size. But to devise these routines, we first have to understand what happens in an exact preprocessing. For simplicity, we assume that each of the $P_i$ has mass on the whole grid $G_{\text{org}}$. In this case, $S=G_{\text{full}}$.

\noindent{\bf Preprocessing for LP (\ref{barymodLP}).} To set up LP (\ref{barymodLP}), the sets $S_{ij}$ and $S_k$ are constructed during the processing of $S^*$. They are used to reduce the number of $y$-variables for each $\x_k\in S$. 

The maximum size of $S_k$, respectively the maximum number of $y$-variables, is $nK^d$ for each $\x_k$. The actual size of $S_k$ depends on the position of $\x_k$ in the grid. An example is depicted in Figure \ref{fig:outergrid}, which shows the effect for four measures in $\mathbb{R}^2$ in a regular grid with $K=4$, resulting in a $13\times 13$ grid. For any $\x_k \in G_{\text{full}}$ with a coordinate in the outer $K-1$ rows or columns (here, 3 rows and columns, highlighted), there exist $\x_{ij} \in G_{\text{org}}$ such that no weighted mean involving that $\x_{ij}$ gives the $\x_k$. Therefore, the corresponding variables $y_{ijk}$ are not introduced in the preprocessing. All other points -- those in the `center' of the grid $G$ -- require all $nK^d$ $y$-variables.

\begin{figure}[t]
\begin{center}
\begin{tikzpicture}
\node (11) at (1,2.7) {$(1,1)$};
\node (41) at (4,2.7) {$(4,1)$};
\node (14) at (1,6.3) {$(1,4)$};
\node (44) at (4,6.3) {$(4,4)$};

\fill [black] (1,3) circle (2pt);
\fill [black] (1.25,3) circle (2pt);
\fill [black] (1.5,3) circle (2pt);
\fill [black] (1.75,3) circle (2pt);
\fill [black] (2,3) circle (2pt);
\fill [black] (2.25,3) circle (2pt);
\fill [black] (2.5,3) circle (2pt);
\fill [black] (2.75,3) circle (2pt);
\fill [black] (3,3) circle (2pt);
\fill [black] (3.25,3) circle (2pt);
\fill [black] (3.5,3) circle (2pt);
\fill [black] (3.75,3) circle (2pt);
\fill [black] (4,3) circle (2pt);
\fill [black] (1,3.25) circle (2pt);
\fill [black] (1.25,3.25) circle (2pt);
\fill [black] (1.5,3.25) circle (2pt);
\fill [black] (1.75,3.25) circle (2pt);
\fill [black] (2,3.25) circle (2pt);
\fill [black] (2.25,3.25) circle (2pt);
\fill [black] (2.5,3.25) circle (2pt);
\fill [black] (2.75,3.25) circle (2pt);
\fill [black] (3,3.25) circle (2pt);
\fill [black] (3.25,3.25) circle (2pt);
\fill [black] (3.5,3.25) circle (2pt);
\fill [black] (3.75,3.25) circle (2pt);
\fill [black] (4,3.25) circle (2pt);

\fill [black] (1,3.5) circle (2pt);
\fill [black] (1.25,3.5) circle (2pt);
\fill [black] (1.5,3.5) circle (2pt);
\fill [black] (1.75,3.5) circle (2pt);
\fill [black] (2,3.5) circle (2pt);
\fill [black] (2.25,3.5) circle (2pt);
\fill [black] (2.5,3.5) circle (2pt);
\fill [black] (2.75,3.5) circle (2pt);
\fill [black] (3,3.5) circle (2pt);
\fill [black] (3.25,3.5) circle (2pt);
\fill [black] (3.5,3.5) circle (2pt);
\fill [black] (3.75,3.5) circle (2pt);
\fill [black] (4,3.5) circle (2pt);
\fill [black] (1,3.75) circle (2pt);
\fill [black] (1.25,3.75) circle (2pt);
\fill [black] (1.5,3.75) circle (2pt);
\fill [black] (1.75,3.75) circle (2pt);
\fill [black] (2,3.75) circle (2pt);
\fill [black] (2.25,3.75) circle (2pt);
\fill [black] (2.5,3.75) circle (2pt);
\fill [black] (2.75,3.75) circle (2pt);
\fill [black] (3,3.75) circle (2pt);
\fill [black] (3.25,3.75) circle (2pt);
\fill [black] (3.5,3.75) circle (2pt);
\fill [black] (3.75,3.75) circle (2pt);
\fill [black] (4,3.75) circle (2pt);

\fill [black] (1,4) circle (2pt);
\fill [black] (1.25,4) circle (2pt);
\fill [black] (1.5,4) circle (2pt);
\fill [black] (1.75,4) circle (2pt);
\fill [black] (2,4) circle (2pt);
\fill [black] (2.25,4) circle (2pt);
\fill [black] (2.5,4) circle (2pt);
\fill [black] (2.75,4) circle (2pt);
\fill [black] (3, 4) circle (2pt);
\fill [black] (3.25, 4) circle (2pt);
\fill [black] (3.5, 4) circle (2pt);
\fill [black] (3.75, 4) circle (2pt);
\fill [black] (4, 4) circle (2pt);
\fill [black] (1,4.25) circle (2pt);
\fill [black] (1.25,4.25) circle (2pt);
\fill [black] (1.5,4.25) circle (2pt);
\fill [black] (1.75,4.25) circle (2pt);
\fill [black] (2,4.25) circle (2pt);
\fill [black] (2.25,4.25) circle (2pt);
\fill [black] (2.5,4.25) circle (2pt);
\fill [black] (2.75,4.25) circle (2pt);
\fill [black] (3, 4.25) circle (2pt);
\fill [black] (3.25, 4.25) circle (2pt);
\fill [black] (3.5, 4.25) circle (2pt);
\fill [black] (3.75, 4.25) circle (2pt);
\fill [black] (4, 4.25) circle (2pt);
\fill [black] (1,4.5) circle (2pt);
\fill [black] (1.25,4.5) circle (2pt);
\fill [black] (1.5,4.5) circle (2pt);
\fill [black] (1.75,4.5) circle (2pt);
\fill [black] (2,4.5) circle (2pt);
\fill [black] (2.25,4.5) circle (2pt);
\fill [black] (2.5,4.5) circle (2pt);
\fill [black] (2.75,4.5) circle (2pt);
\fill [black] (3, 4.5) circle (2pt);
\fill [black] (3.25, 4.5) circle (2pt);
\fill [black] (3.5, 4.5) circle (2pt);
\fill [black] (3.75, 4.5) circle (2pt);
\fill [black] (4, 4.5) circle (2pt);
\fill [black] (1,4.75) circle (2pt);
\fill [black] (1.25,4.75) circle (2pt);
\fill [black] (1.5,4.75) circle (2pt);
\fill [black] (1.75,4.75) circle (2pt);
\fill [black] (2,4.75) circle (2pt);
\fill [black] (2.25,4.75) circle (2pt);
\fill [black] (2.5,4.75) circle (2pt);
\fill [black] (2.75,4.75) circle (2pt);
\fill [black] (3, 4.75) circle (2pt);
\fill [black] (3.25, 4.75) circle (2pt);
\fill [black] (3.5, 4.75) circle (2pt);
\fill [black] (3.75, 4.75) circle (2pt);
\fill [black] (4, 4.75) circle (2pt);

\fill [black] (1,5) circle (2pt);
\fill [black] (1.25,5) circle (2pt);
\fill [black] (1.5,5) circle (2pt);
\fill [black] (1.75,5) circle (2pt);
\fill [black] (2,5) circle (2pt);
\fill [black] (2.25,5) circle (2pt);
\fill [black] (2.5,5) circle (2pt);
\fill [black] (2.75,5) circle (2pt);
\fill [black] (3, 5) circle (2pt);
\fill [black] (3.25, 5) circle (2pt);
\fill [black] (3.5, 5) circle (2pt);
\fill [black] (3.75, 5) circle (2pt);
\fill [black] (4, 5) circle (2pt);
\fill [black] (1,5.25) circle (2pt);
\fill [black] (1.25,5.25) circle (2pt);
\fill [black] (1.5,5.25) circle (2pt);
\fill [black] (1.75,5.25) circle (2pt);
\fill [black] (2,5.25) circle (2pt);
\fill [black] (2.25,5.25) circle (2pt);
\fill [black] (2.5,5.25) circle (2pt);
\fill [black] (2.75,5.25) circle (2pt);
\fill [black] (3, 5.25) circle (2pt);
\fill [black] (3.25, 5.25) circle (2pt);
\fill [black] (3.5, 5.25) circle (2pt);
\fill [black] (3.75, 5.25) circle (2pt);
\fill [black] (4, 5.25) circle (2pt);
\fill [black] (1,5.5) circle (2pt);
\fill [black] (1.25,5.5) circle (2pt);
\fill [black] (1.5,5.5) circle (2pt);
\fill [black] (1.75,5.5) circle (2pt);
\fill [black] (2,5.5) circle (2pt);
\fill [black] (2.25,5.5) circle (2pt);
\fill [black] (2.5,5.5) circle (2pt);
\fill [black] (2.75,5.5) circle (2pt);
\fill [black] (3, 5.5) circle (2pt);
\fill [black] (3.25, 5.5) circle (2pt);
\fill [black] (3.5, 5.5) circle (2pt);
\fill [black] (3.75, 5.5) circle (2pt);
\fill [black] (4, 5.5) circle (2pt);
\fill [black] (1,5.75) circle (2pt);
\fill [black] (1.25,5.75) circle (2pt);
\fill [black] (1.5,5.75) circle (2pt);
\fill [black] (1.75,5.75) circle (2pt);
\fill [black] (2,5.75) circle (2pt);
\fill [black] (2.25,5.75) circle (2pt);
\fill [black] (2.5,5.75) circle (2pt);
\fill [black] (2.75,5.75) circle (2pt);
\fill [black] (3, 5.75) circle (2pt);
\fill [black] (3.25, 5.75) circle (2pt);
\fill [black] (3.5, 5.75) circle (2pt);
\fill [black] (3.75, 5.75) circle (2pt);
\fill [black] (4, 5.75) circle (2pt);
\fill [black] (1,6) circle (2pt);
\fill [black] (1.25,6) circle (2pt);
\fill [black] (1.5,6) circle (2pt);
\fill [black] (1.75,6) circle (2pt);
\fill [black] (2,6) circle (2pt);
\fill [black] (2.25,6) circle (2pt);
\fill [black] (2.5,6) circle (2pt);
\fill [black] (2.75,6) circle (2pt);
\fill [black] (3, 6) circle (2pt);
\fill [black] (3.25, 6) circle (2pt);
\fill [black] (3.5, 6) circle (2pt);
\fill [black] (3.75, 6) circle (2pt);
\fill [black] (4, 6) circle (2pt);
\end{tikzpicture}
\qquad \qquad \qquad \qquad \qquad
\begin{tikzpicture}
\node (11) at (1,2.7) {$(1,1)$};
\node (41) at (4,2.7) {$(4,1)$};
\node (14) at (1,6.3) {$(1,4)$};
\node (44) at (4,6.3) {$(4,4)$};

\fill [green] (1,3) circle (2pt);
\fill [green] (1.25,3) circle (2pt);
\fill [green] (1.5,3) circle (2pt);
\fill [green] (1.75,3) circle (2pt);
\fill [green] (2,3) circle (2pt);
\fill [green] (2.25,3) circle (2pt);
\fill [green] (2.5,3) circle (2pt);
\fill [green] (2.75,3) circle (2pt);
\fill [green] (3,3) circle (2pt);
\fill [green] (3.25,3) circle (2pt);
\fill [green] (3.5,3) circle (2pt);
\fill [green] (3.75,3) circle (2pt);
\fill [green] (4,3) circle (2pt);

\fill [green] (1,3.25) circle (2pt);
\fill [green] (1.25,3.25) circle (2pt);
\fill [green] (1.5,3.25) circle (2pt);
\fill [green] (1.75,3.25) circle (2pt);
\fill [green] (2,3.25) circle (2pt);
\fill [green] (2.25,3.25) circle (2pt);
\fill [green] (2.5,3.25) circle (2pt);
\fill [green] (2.75,3.25) circle (2pt);
\fill [green] (3,3.25) circle (2pt);
\fill [green] (3.25,3.25) circle (2pt);
\fill [green] (3.5,3.25) circle (2pt);
\fill [green] (3.75,3.25) circle (2pt);
\fill [green] (4,3.25) circle (2pt);
\fill [green] (1,3.5) circle (2pt);
\fill [green] (1.25,3.5) circle (2pt);
\fill [green] (1.5,3.5) circle (2pt);
\fill [green] (1.75,3.5) circle (2pt);
\fill [green] (2,3.5) circle (2pt);
\fill [green] (2.25,3.5) circle (2pt);
\fill [green] (2.5,3.5) circle (2pt);
\fill [green] (2.75,3.5) circle (2pt);
\fill [green] (3,3.5) circle (2pt);
\fill [green] (3.25,3.5) circle (2pt);
\fill [green] (3.5,3.5) circle (2pt);
\fill [green] (3.75,3.5) circle (2pt);
\fill [green] (4,3.5) circle (2pt);
\fill [green] (1,3.75) circle (2pt);
\fill [green] (1.25,3.75) circle (2pt);
\fill [green] (1.5,3.75) circle (2pt);
\fill [black] (1.75,3.75) circle (2pt);
\fill [black] (2,3.75) circle (2pt);
\fill [black] (2.25,3.75) circle (2pt);
\fill [black] (2.5,3.75) circle (2pt);
\fill [black] (2.75,3.75) circle (2pt);
\fill [black] (3,3.75) circle (2pt);
\fill [black] (3.25,3.75) circle (2pt);
\fill [green] (3.5,3.75) circle (2pt);
\fill [green] (3.75,3.75) circle (2pt);
\fill [green] (4,3.75) circle (2pt);

\fill [green] (1,4) circle (2pt);
\fill [green] (1.25,4) circle (2pt);
\fill [green] (1.5,4) circle (2pt);
\fill [black] (1.75,4) circle (2pt);
\fill [black] (2,4) circle (2pt);
\fill [black] (2.25,4) circle (2pt);
\fill [black] (2.5,4) circle (2pt);
\fill [black] (2.75,4) circle (2pt);
\fill [black] (3, 4) circle (2pt);
\fill [black] (3.25, 4) circle (2pt);
\fill [green] (3.5,4) circle (2pt);
\fill [green] (3.75,4) circle (2pt);
\fill [green] (4,4) circle (2pt);

\fill [green] (1,4.25) circle (2pt);
\fill [green] (1.25,4.25) circle (2pt);
\fill [green] (1.5,4.25) circle (2pt);
\fill [black] (1.75,4.25) circle (2pt);
\fill [black] (2,4.25) circle (2pt);
\fill [black] (2.25,4.25) circle (2pt);
\fill [black] (2.5,4.25) circle (2pt);
\fill [black] (2.75,4.25) circle (2pt);
\fill [black] (3, 4.25) circle (2pt);
\fill [black] (3.25, 4.25) circle (2pt);
\fill [green] (3.5,4.25) circle (2pt);
\fill [green] (3.75,4.25) circle (2pt);
\fill [green] (4,4.25) circle (2pt);

\fill [green] (1,4.5) circle (2pt);
\fill [green] (1.25,4.5) circle (2pt);
\fill [green] (1.5,4.5) circle (2pt);
\fill [black] (1.75,4.5) circle (2pt);
\fill [black] (2,4.5) circle (2pt);
\fill [black] (2.25,4.5) circle (2pt);
\fill [black] (2.5,4.5) circle (2pt);
\fill [black] (2.75,4.5) circle (2pt);
\fill [black] (3, 4.5) circle (2pt);
\fill [black] (3.25, 4.5) circle (2pt);
\fill [green] (3.5,4.5) circle (2pt);
\fill [green] (3.75,4.5) circle (2pt);
\fill [green] (4,4.5) circle (2pt);

\fill [green] (1,4.75) circle (2pt);
\fill [green] (1.25,4.75) circle (2pt);
\fill [green] (1.5,4.75) circle (2pt);
\fill [black] (1.75,4.75) circle (2pt);
\fill [black] (2,4.75) circle (2pt);
\fill [black] (2.25,4.75) circle (2pt);
\fill [black] (2.5,4.75) circle (2pt);
\fill [black] (2.75,4.75) circle (2pt);
\fill [black] (3, 4.75) circle (2pt);
\fill [black] (3.25, 4.75) circle (2pt);
\fill [green] (3.5,4.75) circle (2pt);
\fill [green] (3.75,4.75) circle (2pt);
\fill [green] (4,4.75) circle (2pt);

\fill [green] (1,5) circle (2pt);
\fill [green] (1.25,5) circle (2pt);
\fill [green] (1.5,5) circle (2pt);
\fill [black] (1.75,5) circle (2pt);
\fill [black] (2,5) circle (2pt);
\fill [black] (2.25,5) circle (2pt);
\fill [black] (2.5,5) circle (2pt);
\fill [black] (2.75,5) circle (2pt);
\fill [black] (3, 5) circle (2pt);
\fill [black] (3.25, 5) circle (2pt);
\fill [green] (3.5, 5) circle (2pt);
\fill [green] (3.75, 5) circle (2pt);
\fill [green] (4, 5) circle (2pt);
\fill [green] (1,5.25) circle (2pt);
\fill [green] (1.25,5.25) circle (2pt);
\fill [green] (1.5,5.25) circle (2pt);
\fill [black] (1.75,5.25) circle (2pt);
\fill [black] (2,5.25) circle (2pt);
\fill [black] (2.25,5.25) circle (2pt);
\fill [black] (2.5,5.25) circle (2pt);
\fill [black] (2.75,5.25) circle (2pt);
\fill [black] (3, 5.25) circle (2pt);
\fill [black] (3.25, 5.25) circle (2pt);
\fill [green] (3.5, 5.25) circle (2pt);
\fill [green] (3.75, 5.25) circle (2pt);
\fill [green] (4, 5.25) circle (2pt);
\fill [green] (1,5.5) circle (2pt);
\fill [green] (1.25,5.5) circle (2pt);
\fill [green] (1.5,5.5) circle (2pt);
\fill [green] (1.75,5.5) circle (2pt);
\fill [green] (2,5.5) circle (2pt);
\fill [green] (2.25,5.5) circle (2pt);
\fill [green] (2.5,5.5) circle (2pt);
\fill [green] (2.75,5.5) circle (2pt);
\fill [green] (3, 5.5) circle (2pt);
\fill [green] (3.25, 5.5) circle (2pt);
\fill [green] (3.5, 5.5) circle (2pt);
\fill [green] (3.75, 5.5) circle (2pt);
\fill [green] (4, 5.5) circle (2pt);
\fill [green] (1,5.75) circle (2pt);
\fill [green] (1.25,5.75) circle (2pt);
\fill [green] (1.5,5.75) circle (2pt);
\fill [green] (1.75,5.75) circle (2pt);
\fill [green] (2,5.75) circle (2pt);
\fill [green] (2.25,5.75) circle (2pt);
\fill [green] (2.5,5.75) circle (2pt);
\fill [green] (2.75,5.75) circle (2pt);
\fill [green] (3, 5.75) circle (2pt);
\fill [green] (3.25, 5.75) circle (2pt);
\fill [green] (3.5, 5.75) circle (2pt);
\fill [green] (3.75, 5.75) circle (2pt);
\fill [green] (4, 5.75) circle (2pt);
\fill [green] (1,6) circle (2pt);
\fill [green] (1.25,6) circle (2pt);
\fill [green] (1.5,6) circle (2pt);
\fill [green] (1.75,6) circle (2pt);
\fill [green] (2,6) circle (2pt);
\fill [green] (2.25,6) circle (2pt);
\fill [green] (2.5,6) circle (2pt);
\fill [green] (2.75,6) circle (2pt);
\fill [green] (3, 6) circle (2pt);
\fill [green] (3.25, 6) circle (2pt);
\fill [green] (3.5, 6) circle (2pt);
\fill [green] (3.75, 6) circle (2pt);
\fill [green] (4, 6) circle (2pt);
\end{tikzpicture}
\end{center}
\caption{The barycenter of $4$ measures densely supported in $4\times 4$ grids lies in a $13 \times 13$ grid. The outer $3$ rows and columns (in green) have points in each original $4 \times 4$ grid to which they will not transport.}\label{fig:outergrid}
\end{figure}
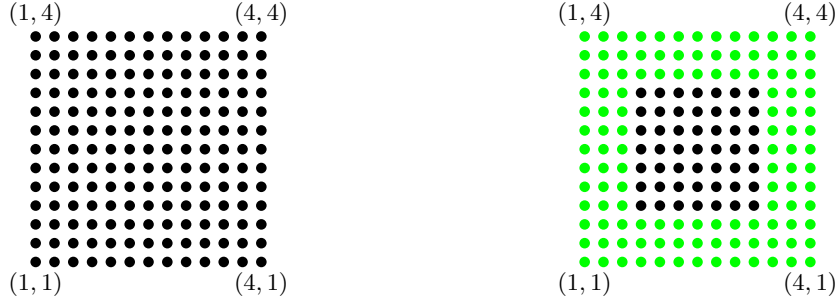

\noindent{\bf Preprocessing for LP (\ref{LPhybrid}).} Next, we discuss the choice between a representation of a weighted mean through $w$-variables versus $y,z$-variables. We choose $y,z$-variables when there are more than $nK^d+1$ combinations producing an $\x_k$. 

We can calculate the number of combinations for each support point $\x_k$ by counting the number of combinations of integers with a specific sum as follows. Beginning in dimension one, denote $\x_{ij}$ as integers between $1$ and $K$ and $s = n\cdot \x_k = \sum_{i=1}^n \x_{ij}$. The number of combinations of $\x_{ij}$ that give $\x_k$ is equivalent to the number $F(s,K,n)$ of combinations of $n$ integers between $1$ and $K$ that sum up to $s$. This is a standard counting problem, in which $n$ $K$-sided dice are tossed and the number of combinations that give sum $s$ are calculated. Thus, $F(s,K,n)$ is given by
\begin{equation*} F(s,K,n) =\sum_{m=0}^n (-1)^m {n \choose m}{ s-mK-1 \choose n-1}. \end{equation*}

We extend this formula to $\x_k\in\mathbb{R}^d$ for higher dimension $d$ by considering each coordinate of $\x_k$ individually. Let $s_l$ be the corresponding sum of the $l$-th coordinate of $\x_k$; then the total number of combinations whose weighted mean is $\x_k$ is \begin{equation*}N_k = \prod_{l = 1}^d F(s_l,K,n).\end{equation*}

So, for any $k$ such that $N_k$ exceeds $nK^d+1$, using $y,z$-variables (rather than introducing individual $w_h, h = 1,\dots, N_k$ for all combinations) produces fewer total variables in LP (\ref{LPhybrid}). The $\x_k$ for which $w$-variables are preferable correspond to the `rounded' corners of $G_{\text{full}}$ as depicted in Figure \ref{fig:corner} -- the darker a support point, the fewer combinations in $S^*$ correspond to it.

Except for small $n$, the majority of the grid prefers $y,z$-variables. Further, the corners where $w$-variables are preferable are contained in the areas of the grid where the number of $y,z$-variables are already reduced from $nK^d+1$ for setup of LP (\ref{barymodLP}); see Figure \ref{fig:outergrid}. In the computational experiments in Section \ref{sec:noprior}, we see that LP (\ref{barymodLP}) provides a significant improvement over LP (\ref{baryLP}). On the other hand, the difference between LP  (\ref{LPhybrid}) and LP (\ref{barymodLP}) is small in view of the computationally more expensive preprocessing needed to set up LP  (\ref{LPhybrid}). In Section \ref{sec:besthybrid}, we provide a different type of data for an example in which the use of LP  (\ref{LPhybrid}) greatly outperforms the others.

\begin{figure}[t]
\begin{center}
\includegraphics[scale = .4]{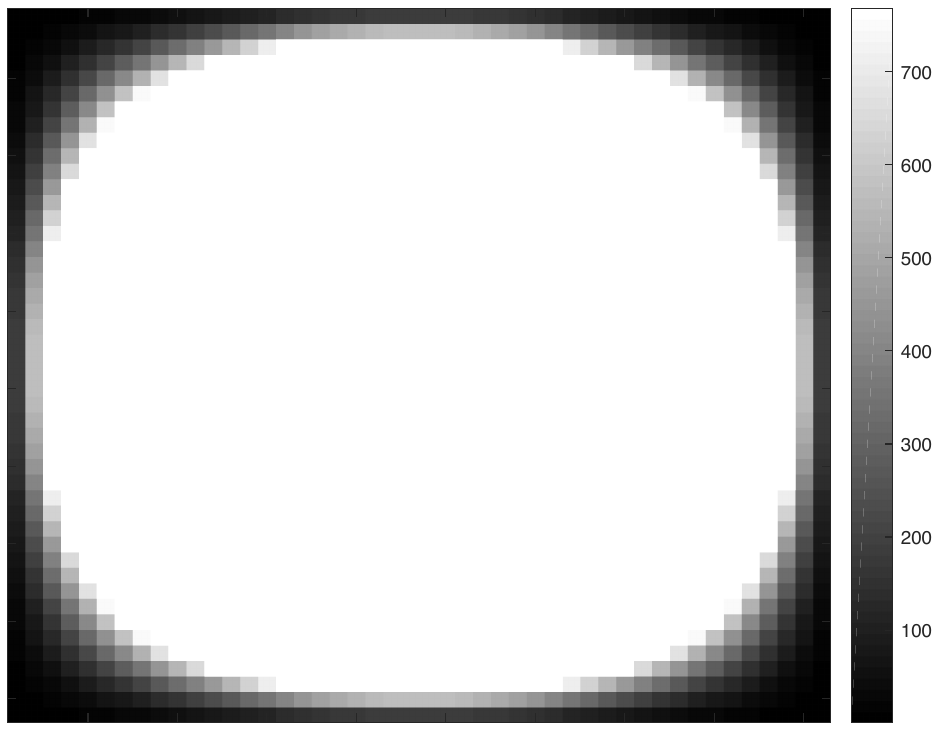}
\includegraphics[scale = .4]{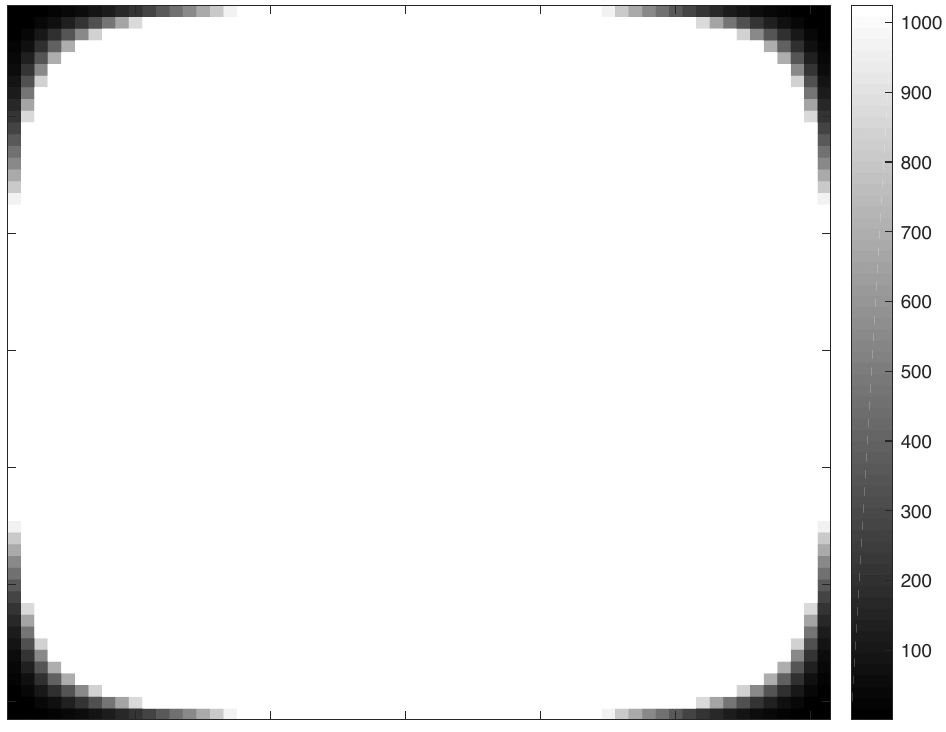}
\end{center}
\caption{The set $S$ of support points for $3$ (left) and $4$ (right) measures supported densely in a $16 \times 16$ grid. The dark support points indicate where $w$-variables are preferable over $y,z$-variables in LP (\ref{LPhybrid}).}\label{fig:corner}
\end{figure}

\noindent{\bf Efficient preprocessing to improve on LP (\ref{baryLP}).} The first step towards a good model for grid-structured data is the implementation of LP (\ref{baryLP}) on $G_{\text{full}}$; recall Theorem \ref{thm:stronggrid}. We now devise another improvement in the form of a smaller subgrid $G \subseteq G_{\text{full}}$ that still contains $S$.

For grid-structured data, $G_{\text{full}}$ may still contain points that are not in $S$. This occurs in the digits data set, because mass only exists on part of the $16 \times 16$ grid. For efficiency, $G_{\text{full}}$ must be generated without checking if the $\x_{ij}$ which produce $\x_k$ have positive mass. Instead, to obtain a smaller sub-grid $G$, we track the largest and smallest values of each coordinate of the dimension for each measure. The weighted means of these extreme coordinate values give sufficient minimum and maximum values for a subgrid $G$ of $G_{\text{full}}$ that contains $S$. We use this set $G$ in LPs (\ref{baryLP}) and LP (\ref{barymodLP}) in the computations for Section \ref{sec:prior}. 

LP (\ref{baryLP}) can be implemented directly on $G$. We have already seen that a significant reduction in model size when working over $S$ is possible through the setup of LP (\ref{barymodLP}). This effect also holds on $G$, but there is an additional challenge: For efficient preprocessing, we have to avoid processing $S^*$ and thus we do not construct the sets $S_{ij}$ and $S_k$ required to set up LP (\ref{barymodLP}) exactly. We need a different approach to eliminate extraneous $y$-variables for $G$.

Recall the `boundary' effect depicted in Figure \ref{fig:outergrid}. The grid points $\x_k$ that have at least one coordinate within the minimum or maximum among all points in $G$ have $y$-variables that can be eliminated -- there exist points $\x_{ij}$ in the original grid $G_{\text{org}}$ such that $y_{ijk} = 0$ in all optimal solutions. Because of the grid structure, it is easy to identify some of these $\x_{ij}$: for example, let $\x_k$ have the largest first coordinate. Then it can only be constructed from the $\x_{ij}$ with a largest first coordinate in each measure. For a second-largest first coordinate, only the second-largest or largest coordinates of $\x_{ij}$ are possible, and so on. 

A reduction in $y$-variables based on this principle requires only a few comparisons for each $\x_k$ and guarantees the elimination of many variables: the outer $K-1$ rows and columns have at least $n K$ possible $\x_{ij}$ to which they do not transport. Thus, the reduction in model size scales with both the width $K$ of the original grid and the number $n$ of measures. In Section \ref{sec:prior}, we see that this reduction is significant in practice and produces the fastest total running times in our computations.

\section{Computational Experiments}\label{sec:comp}

In this section, we compare the performance of the different LPs for different types of data through some computational experiments. For each of the models LP (\ref{LPw}), LP (\ref{barymodLP}), and LP (\ref{LPhybrid}), there exists data such that they become the respectively smallest model with fastest total running times.

Section \ref{sec:compgeneral} is dedicated to data in general position. In Section \ref{sec:digitcomp}, we study computations for grid-based data, and use these to highlight the advantages of the preprocessing routines described in Section \ref{sec:apriori}.  Finally, Section \ref{sec:besthybrid} is dedicated to an example where using LP (\ref{LPhybrid}) significantly outperforms the other formulations.

All computations were run on a standard laptop (MacBook Pro, 2.9 GHz Intel Core i7, 16 GB of RAM, SSD). Data processing and the setup of the LPs, in particular the preprocessing for grid-structured data from Section \ref{sec:apriori}, were implemented in C\texttt{++} and the LPs were solved using Gurobi 8.0. In all computations, the Gurobi solver parameters are fixed and the pre-solvers switched off. We report on solution times for a primal Simplex algorithm -- all single-thread LP solvers behaved similarly. The pre-solving done by LP solvers for general linear programs and our tailored preprocessing routines (Section \ref{sec:apriori}) are conceptually different and do not compete with each other. In practice, we would use both: first our tailored preprocessing to obtain an improved model that is passed to the solver, then the internal pre-solving, and finally the actual run of an LP algorithm. However, the impact of pre-solvers on computation times may vary dramatically between different instances of the same model (sometimes even multiple runs of the same instance). Due to the difficulty of measuring this impact, we follow the standard practice to switch off pre-solvers for a comparison of computation times.

\subsection{Data in General Position: Denver Crime}\label{sec:compgeneral}

We demonstrate the practical advantage of LP (\ref{LPw}) over LP  (\ref{baryLP}) for data in general position through some proof-of-concept computations on crime data for Denver County, which is available as part of the Denver Open Data Catalog (\url{www.denvergov.org/opendata}). The data set is constructed from the dates and locations of murders with a year. Each month forms a measure $P_i, i = 1, \ldots, 12,$ by weighting each murder evenly during the month. Thus, for months with fewer total incidents, the mass on each location is larger. In doing so, each month represents a {\em crime pattern} -- a set of geographical locations where incidents happen `at the same time'. There is no (obvious) underlying structure to the crime patterns, so we obtain a data set in general position.

We are interested in computing locations for police presence such that a fastest average response time to all incidents occurring in all crime patterns is achieved. The police presence itself should be fixed, {\em not} vary depending on the month. A discrete barycenter for this data set can be readily interpreted (and justified) to indicate locations for police presence: The locations are chosen such that a fast response to incidents in each crime pattern is possible. Conversely, for each incident, there is police presence at a location from which a fast response is possible. Recall that a barycenter computation uses squared Euclidean distances; this leads to a fair treatment of outlier incidents. An aggregate image of all murder locations in 2016 and a corresponding barycenter are displayed in Figure \ref{fig:murders}. The radii of the shapes in both parts of the figure are relative to their masses. 

Based on the theoretical analysis in Section \ref{sec:datageneral}, we only highlight the advantage of LP (\ref{LPw}) over LP (\ref{baryLP}): While LP (\ref{barymodLP}) would also be an improvement over LP (\ref{baryLP}), it is dominated by the even better performance of LP (\ref{LPw}). Further, a best implementation of LP (\ref{LPhybrid}) produces the same linear program LP (\ref{LPw}).  

In Table \ref{GenTotalRun}, we compare the sizes of and running times for LPs (\ref{baryLP}) and (\ref{LPw}) for five years, 2012 to 2016. The reduction in number of variables and number of constraints is substantial; the linear growth of the number of constraints is readily visible in the small number of rows, and the number of variables is typically about 2\% of the number for LP (\ref{baryLP}). For two years, 2013 and 2015, LP (\ref{baryLP}) is unable to solve due to memory capacity, so, in particular, the use of LP (\ref{LPw}) allows the computation of new solutions. The total running times using LP (\ref{LPw}) are typically only a few seconds. 

\begin{figure}[t]
\begin{center}
\includegraphics[scale = .4]{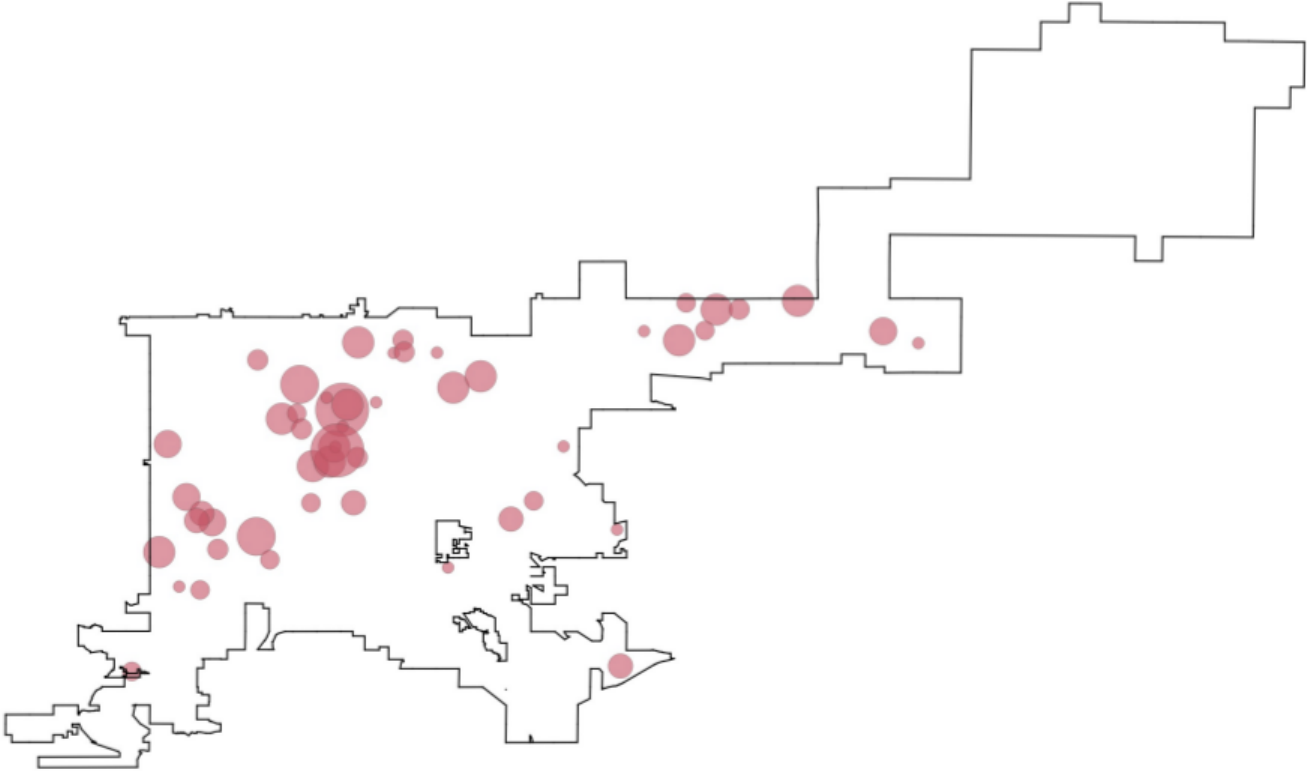}
\end{center}

\begin{center}
\includegraphics[scale = .4]{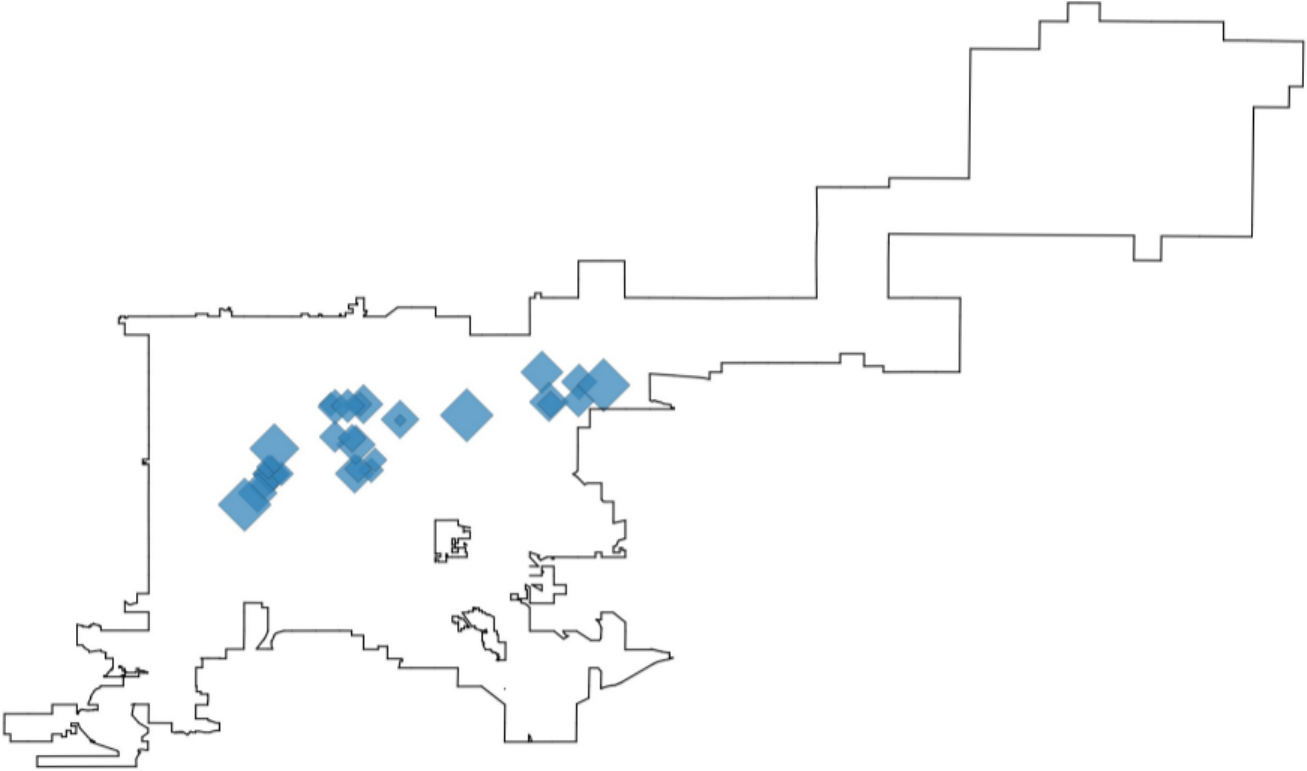}
\end{center}
\caption{Murder locations in Denver County in 2016 (top) and a barycenter indicating suggested police presence locations (bottom). }\label{fig:murders}
\end{figure}

\begin{table}
\begin{center}
\begin{tabular}{|c|c|c|c|c|c|c|}\hline
 & \multicolumn{3}{c|}{LP (\ref{baryLP}) } &  \multicolumn{3}{c|}{ LP (\ref{LPw}) }  \\ \hline
 Year & Constr & Vars & Time in Seconds & Constr & Vars & Time in Seconds \\ \hline
2012 & 1,520,675& 4,976,640 & 47,865.53 & 35 & 138,240 & 1.06  \\ \hline
2013 & 23,887,914 & 85,598,208 &* & 42 & 1,990,656 & 31.14 \\ \hline
2014 & 663,585 & 1,880,064 & 5,824.63 & 33 & 55,296 & 0.43 \\ \hline
2015 & 303,464,505 & 1,466,744,832&  * & 57 & 25,288,704 & 430.18\\ \hline
2016 & 26,127,412 & 115,395,840 & 223,886.9 & 52 & 2,177,280 & 19.15\\ \hline
\end{tabular}
\caption{Total running times comparing LP (\ref{baryLP}) and LP (\ref{LPw}) for data in general position. We see dramatic improvement using LP (\ref{LPw}), including computations that were not possible before (*).}\label{GenTotalRun}
\end{center}
\end{table}

This dramatic improvement is just as expected in view of the discussion in Section \ref{sec:datageneral}. LP (\ref{LPw}) is the smallest model for data in general position, and no sophisticated preprocessing is necessary to set it up. The other models are not competitive. The situation becomes more interesting for grid-based data, which we discuss next, and in more detail. 

\subsection{Grid-Structured Data: MNIST digits}\label{sec:digitcomp}

For computations of barycenters on the digits data set, we implemented LPs (\ref{baryLP}), (\ref{barymodLP}), (\ref{LPw}), and (\ref{LPhybrid}) using the set $S$, then also implemented LPs (\ref{baryLP}) and (\ref{barymodLP}) using the set $G$. Recall the discussion in Section \ref{sec:apriori}; the set $S$ requires the expensive processing of $S^*$, whereas $G$ does not. While using $G$ generates larger linear programs, it proves to be the much better approach when total times, including preprocessing, are considered. Representative sizes and times were formed by running computations for several sets of images for each digit and averaging the results.

\subsubsection{Computations on $S$: Slow Preprocessing, Small LPs\\}\label{sec:noprior}

We first examine computations in which we generate $S$ by processing each element of $S^*$ and check for duplication in those elements already produced. This requires exponential effort, but allows us to determine precisely which $\x_k $ can be generated by the support set of the digits under consideration. Overall, this slow preprocessing produces the smallest LPs.

Table \ref{SizeDigitS} shows the average number of constraints and variables for each formulation using this preprocessing strategy. 
The number of variables in LP (\ref{LPw}) is larger, by several orders of magnitude, than for the other models. Its only advantage is the lower number of constraints, but this effect is dominated by the dramatic increase in number of variables. Moving from LP (\ref{baryLP}) to LP (\ref{barymodLP}), there is about a $44\%$ reduction in variables.  As expected in light of the discussions in Section \ref{sec:modgrid} and \ref{sec:apriori}, LP (\ref{LPhybrid}) does show a consistent further improvement in model size, but the reduction in variables using LP (\ref{LPhybrid}) is relatively minor, typically less than $1\%$.

\begin{table}[t]
\begin{center}
\begin{tabular}{|c|c|c|c|c|c|c|c|c|c|}\hline
Digit & \multicolumn{2}{c|}{LP (\ref{baryLP})} &  \multicolumn{2}{c|}{LP (\ref{barymodLP})} &  \multicolumn{2}{c|}{LP (\ref{LPw})} &  \multicolumn{2}{c|}{LP (\ref{LPhybrid})} &$ |S|$ \\ \hline
& Constr & Vars& Constr & Vars& Constr & Vars& Constr & Vars & \\ \hline
0 & 10,598 & 1,317,437  & 10,598 & 740,116 &522 &281,656,116 & 10,130 & 739,615 & 2,519 \\ \hline
1 & 3,083 & 144,200 & 3,083 & 82,834 &199 & 6,058,800 & 2,867&82,624 & 721 \\ \hline
2 & 10,589 & 1,072,724 & 10,589 & 596,785 & 421 & 116,865,990& 10,001 & 596,191 & 2,542 \\ \hline
3 & 10,999 & 1,212,100 &10,999 & 679,939 & 459 & 167,401,080 & 10,503 & 679,442 & 2,635  \\ \hline 
4 & 8,964 & 801,204 & 8,964 & 443,410 & 372 & 74,412,360 & 8,260 & 442,725 & 2,148 \\ \hline
5 & 10,999 & 1,282,464 & 10,999 & 716,753 & 487 & 206,928,720 & 10,587 & 716,328 & 2,628  \\ \hline
6 &  7,443 & 617,352 & 7,443 & 343,541 & 347 & 53,316,900 & 7,047 & 343,135 & 1,774  \\ \hline
7 & 7,944 &640,974  & 7,944& 353,378& 336 &48,014,460 & 7,432 & 352,843 & 1,902 \\ \hline
8 & 10,104 & 1,193,794 & 10,104 & 661,078 & 496 & 211,403,136 & 11,964 & 1,424,899 & 2,867 \\ \hline
9 & 8,787 & 918,280 & 8,787 & 500,554 & 439 & 131,637,120& 8,207 & 499,940  & 2,087 \\ \hline 
\end{tabular}
\end{center}
\caption{The average number of constraints and variables for four measures of each digit, using set $S$. Using LP (\ref{baryLP}) as a baseline, LP (\ref{barymodLP}) is a significant improvement in size. LP (\ref{LPhybrid}) is slightly smaller than LP (\ref{barymodLP}). LP (\ref{LPw}) is not viable.}
\label{SizeDigitS}
\end{table}

Table \ref{RunDigitS} provides the {\em total} running times, with sub-categories of the {\em setup} (preprocessing) and {\em solution} times, for LP (\ref{baryLP}), LP (\ref{barymodLP}) and LP (\ref{LPhybrid}), as well as the LP solution times for LP (\ref{LPw}) when available. (Recall that that the internal pre-solving of Gurobi is turned off. The setup refers to our own routines to set up the model and load it to the solver.) The LP solution times are precisely as predicted based on the sizes of the programs: LP (\ref{LPw}) is several orders of magnitude slower than the other formulations; the (*) represent scenarios in which the LP solver was unable to complete due to memory constraints. LP (\ref{barymodLP}) and LP (\ref{LPhybrid}) both show improved solution times over LP (\ref{baryLP}). LP (\ref{LPhybrid}) consistently gives the fastest solution times, but the advantage over LP (\ref{barymodLP}) is not significant.

Comparing the total running times from Table \ref{RunDigitS}, as expected, the setup times dominate the solution times in contribution to the total. This setup cost increases from LP (\ref{baryLP}) to LP (\ref{barymodLP}), and then further to LP (\ref{LPhybrid}). LP (\ref{barymodLP}) is typically the best choice for overall time, due to a best tradeoff of decreased solution time to increased setup time. For LP (\ref{LPhybrid}), the minor reduction in size and solution times is insufficient to justify the increased setup time. 

\begin{table}[t]
\begin{center}
\begin{tabular}{|c|c|c|c|c|c|c|c|c|c|c|}\hline
\multicolumn{11}{|c|}{Time in Seconds} \\ \hline
Digit & \multicolumn{3}{c|}{LP (\ref{baryLP})} &  \multicolumn{3}{c|}{LP (\ref{barymodLP})} &  \multicolumn{3}{c|}{LP (\ref{LPhybrid})} & LP (\ref{LPw}) \\ \hline
& Setup & Solve & Total& Setup & Solve & Total& Setup & Solve & Total & Solve\\ \hline
0 & 604.25 & 141.44 & \textbf{746.23} & 688.97 & 102.94 & 792.09 & 745.62 & \textbf{92.95} & 838.77 & *   \\ \hline
1 & 4.57 & 2.49  & \textbf{7.09} & 5.95 & 1.39 & 7.36 & 7.09 & \textbf{1.20} & 8.32& 311.27 \\ \hline
2 & 304.10 & 125.26 & 429.68 & 337.77 & 58.44 & 396.37 & 349.81 & \textbf{41.95} & \textbf{391.91}& 18,464.7 \\ \hline
3 & 402.63 & 152.85 & 555.83  & 448.20 & 71.81 & \textbf{520.18} & 480.72 & \textbf{68.29} & 549.23 & *\\ \hline
4 & 141.49 & 83.20 & 224.96 & 159.98 & 33.88 & 193.97 & 184.60 & \textbf{33.52} & \textbf{182.23}& 12,224.8 \\ \hline
5 & 467.24 & 163.34 & 630.99 & 535.25 & 86.74 & \textbf{622.24} & 582.32 & \textbf{61.11} & 643.59& * \\ \hline
6 & 92.21 & 45.09 & 137.50 & 104.93 & 24.79 & \textbf{129.80}  & 116.76 & \textbf{17.48} & 134.35 & 2,337.17  \\ \hline
7 & 69.82 & 53.71 & 123.71 & 81.64 & \textbf{21.41} & \textbf{103.13} & 91.18 & 21.82 & 113.07 &  6,060.96 \\ \hline
8 & 375.41 & 114.70 & \textbf{490.47} & 438.58 & 91.61 & 530.36 & 480.66 & \textbf{51.69} & 532.52 &* \\ \hline
9 &226.72  & 92.44   & 319.45 & 263.57 & 41.83 & \textbf{305.52} & 289.74 & \textbf{34.56} & 324.42 &*\\ \hline
\end{tabular}
\end{center}\caption{Average times for four measures of each digit, using set $S$. For all digits, the size reduction in LPs (\ref{barymodLP}) and (\ref{LPhybrid}) offers significant savings in solution times. Fastest solution and total times are displayed in bold. LP (\ref{LPw}) is not competitive; solution times are several orders of magnitude larger. Due to the processing of $S^*$, the times for LP (\ref{baryLP}), LP (\ref{barymodLP}) and LP (\ref{LPw}) are dominated by the setup time. For most digits, LP (\ref{barymodLP}) offers the best total time.} \label{RunDigitS}
\end{table}

\subsubsection{Computations on $G$: Fast Preprocessing, Slightly Larger LPs\\}\label{sec:prior}

Next, we show results for the same collections of digits for LP (\ref{baryLP}) and LP (\ref{barymodLP}), using the easy-to-preprocess subgrid $G$ of $G_{\text{full}}$ instead of $S$. Recall the discussion in Section \ref{sec:apriori} regarding the underlying structure which reveals that an implementation of LP (\ref{LPhybrid}) for $G$ is not promising. 

Since $S \subseteq G$, the linear programs using $G$ are larger; however, as $G$ is significantly easier to generate, the preprocessing effort is greatly reduced. In Table \ref{SizeDigitG}, we provide $|G|$ and $|S|$ for our representative digits, along with the corresponding program sizes. There is about a $35\%$ increase in variables for LP (\ref{barymodLP}) on $G$ compared to the corresponding formulation on $S$. We also note that the number of variables for LP (\ref{barymodLP}) on $G$ is smaller than for LP (\ref{baryLP}) on $S$ for all our sample runs.


\begin{table}[t]
\begin{center}
\begin{tabular}{|c|c|c|c|c|c|c|c|c|c|c|}\hline
& \multicolumn{5}{c|}{$G$} & \multicolumn{5}{c|}{$S$} \\ \hline
& \multicolumn{2}{c|}{LP (\ref{baryLP})}  &  \multicolumn{2}{c|}{LP (\ref{barymodLP})} & $|G|$ & \multicolumn{2}{c|}{LP (\ref{baryLP})} &  \multicolumn{2}{c|}{LP (\ref{barymodLP})} & $|S|$ \\ \hline
Digit & Constr & Vars& Constr & Vars & &Constr & Vars& Constr & Vars &\\ \hline
0 & 12,478 & 1,563,247 & 12,478 & 1,139,427 & 2,989 & 10,598 & 1,317,437  & 10,598 & 740,116 & 2,519 \\ \hline
1 & 3,371 & 158,600 & 3,371 & 117,684 & 793 & 3,083 & 144,200 & 3,083 & 82,834 & 721\\ \hline
2 & 13,417  & 1,371,078 & 13,417 & 999,274 & 3,249 & 10,589 & 1,072,724 & 10,589 & 596,785 & 2,542\\ \hline
3 & 12,903 & 1,431,060 & 12,903 & 1,026,691 & 3,111 & 10,999 & 1,212,100 & 10,999 & 679,939 & 2,635\\ \hline
4 & 11,840 & 1,069,391 & 11,840 & 776,727 & 2,867 & 8,964 & 801,204 & 8,964 & 443,410 & 2,148  \\ \hline
5 & 11,955 & 1,399,096 & 11,955 & 1,016,077 & 2,867 & 10,999 & 1,282,464 & 10,99 & 715,753 & 2,628 \\ \hline
6 & 8,887 & 742,980 & 8,887 & 541,330 & 2,135 & 7,443 & 617,352 & 7,443 & 343,541 & 1,774 \\ \hline
7 & 10,340 & 842,837  & 10,340  & 605,340 & 2,501 & 7,944 & 640,974 & 7,944 & 353,378 & 1,902 \\ \hline
8 & 11,964 & 1,424,899 & 11,964 & 1,029,485 & 2,867 & 10,104 & 1,193,794 & 10,104 & 661,078 & 2,402 \\ \hline
9 & 10,931 & 1,154,120 & 10,931 & 818,342 & 2,623 & 8,787 & 918,280 & 8,787 &500,554 & 2,087  \\ \hline
\end{tabular}
\end{center}\caption{The average number of constraints and variables for four measures of each digit using the easily generated, but larger, set $G$ versus using the difficult to generate, but smaller set $S$. The problem sizes on $G$ are approximately $35\%$ larger.}\label{SizeDigitG}
\end{table}

The advantage of using set $G$ is immediately apparent in Table \ref{SolveDigitG}. The setup of both LPs takes only a few seconds and is almost negligible in view of the total running time. Unlike for set $S$, the setup time for LP (\ref{barymodLP}) on $G$ decreases slightly from the already low setup time for LP (\ref{baryLP}) on $G$:  The fact that there are fewer $y$-variables in LP (\ref{barymodLP}) now is a direct advantage and becomes visible in the setup time. As expected, the solution times for LP (\ref{barymodLP}) on $G$ also are improvements over the solution times for LP (\ref{baryLP}).

Finally, in Table \ref{RunDigitG} we see the significant and consistent improvement in total running times when moving from LPs on $S$ to LPs on $G$. The slower LP solution times on $G$ are greatly outweighed by the reduction in setup times. We see up to a $75\%$ reduction in total running time from LP (\ref{baryLP}) on $S$ to LP (\ref{barymodLP}) on $G$.

\begin{table}[t]
\begin{center}
\begin{tabular}{|c|c|c|c|c|c|c|}\hline
& \multicolumn{6}{c|}{Time in Seconds} \\ \hline
Digit &\multicolumn{3}{c|}{ LP (\ref{baryLP}) }&\multicolumn{3}{c|}{ LP (\ref{barymodLP})} \\ \hline
& Setup & Solve & Total & Setup & Solve & Total \\ \hline
0 & 9.38  & 233.97& 243.64 & \textbf{7.92} & \textbf{192.63} &\textbf{200.89}\\ \hline
1 & 0.42  & 2.94& 3.18 & \textbf{0.39} & \textbf{2.58} & \textbf{3.00}\\ \hline
2 & 5.00  & 256.18 & 261.35 & \textbf{4.08}& \textbf{208.43} &\textbf{212.79} \\ \hline
3 & 6.73   & 282.34 &289.25 & \textbf{5.28}& \textbf{141.66} &\textbf{147.25} \\ \hline 
4 & 3.77  & 146.12 & 149.99 &\textbf{2.97}& \textbf{119.97} & \textbf{123.19} \\ \hline
5 & 7.31 & 201.88 & 209.39 & \textbf{6.10} & \textbf{146.32} &\textbf{152.73}  \\ \hline
6 & 2.65  & 55.87 & 58.51 & \textbf{2.10} & \textbf{45.32} &\textbf{47.58} \\ \hline
7 & 2.71 & 107.41& 110.13 &\textbf{2.03}  & \textbf{52.14} &\textbf{54.35}\\ \hline
8 & 7.53  &  197.01 &204.75 & \textbf{6.41}& \textbf{184.54} &\textbf{191.28} \\ \hline
9 & 5.06 & 172.58& 177.79 &\textbf{4.47}  & \textbf{127.23} &\textbf{131.96} \\ \hline
\end{tabular}
\end{center}
\caption{Average setup, solution, and total running times for four measures of each digit using the easily generated, but larger, set $G$. Due to the simplicity of constructing the set $G$, the setup times are consistently low and contribute far less to the total running times than the solution times. LP (\ref{barymodLP}) outperforms LP (\ref{baryLP}) in both setup and solution times. Fastest times are displayed in bold.}\label{SolveDigitG}
\end{table}

\begin{table}[t]
\begin{center}
\begin{tabular}{|c|c|c|c|c|c|c|}\hline
 \multicolumn{5}{|c|}{Total Running Time in Seconds} \\ \hline
Digit &\multicolumn{2}{c|}{LP (\ref{baryLP})}  & \multicolumn{2}{c|}{LP (\ref{barymodLP})}\\ \hline
&$S$ & $G$ & $S$ & $G$  \\ \hline
0 & 746.23 & 243.62 & 792.09& \textbf{200.89}  \\ \hline
1 &7.09 &  2.94 &7.36 & \textbf{2.58}   \\ \hline
2 & 429.68 & 261.35 & 396.37& \textbf{212.79}  \\ \hline
3 & 555.83& 289.25 & 520.18  &  \textbf{147.25}\\ \hline
4 & 224.97& 149.99 & 193.97 & \textbf{123.19}  \\ \hline
5  & 630.99& 209.39 & 622.24  & \textbf{152.73} \\ \hline
6 & 137.50& 58.51 & 129.80  & \textbf{47.58} \\ \hline
7 & 123.71 & 110.13& 103.13 & \textbf{54.35}   \\ \hline
8 & 490.47 & 204.75& 530.36  & \textbf{191.28}   \\ \hline
9 & 319.45 & 177.79& 305.52 & \textbf{131.96}   \\ \hline
\end{tabular}
\end{center}\caption{Average total running times including setup for four measures of each digit, using the easily generated set $G$ versus  the difficult to generate set $S$. LP (\ref{barymodLP}) on $G$ performs best by a significant margin in all runs. Fastest total times are displayed in bold. } \label{RunDigitG}
\end{table}

\subsection{Hybrid Data: An MNIST-style example}\label{sec:besthybrid}

For our final computational experiments, we consider an example in which LP (\ref{LPhybrid}) performs significantly better than the other three formulations. For these experiments, we compute a barycenter (using the exact $S$) for four measures representing the letter ``i''. The main difference of these (handwritten) letters lies in the location of the dot, rather than the line at the base of the letter. Thus, we desire more accuracy for the upper portion of the image. To this end, we record the letters in a combination of two grids: the base is recorded in a $16 \times 16$ grid, in the style of the MNIST digits. The dot is recorded in what we call the {\em upper grid}. We compare the LP formulations as we refine this upper grid, starting from the original $16 \times 16$ grid; refinement 2 instead introduces support points from a $32 \times 32$ grid with half the step size. Two sample letters are shown in Figure \ref{fig:Samplei}.

\begin{figure}[t]
\begin{center}
\includegraphics[scale = .6]{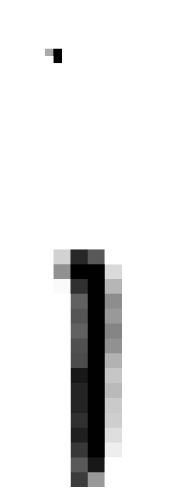}
\hspace{.4in}
\includegraphics[scale = .6]{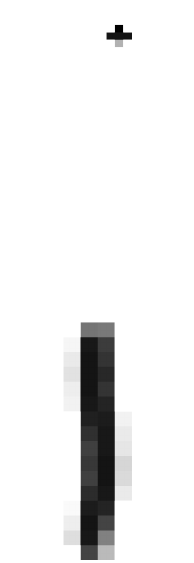}
\end{center}
\caption{Sample letters ``i'' with dots mapped to a two-times-finer upper grid.}\label{fig:Samplei}
\end{figure} 

Throughout the experiments, we hold the number of support points in the $P_i$ constant, i.e., the dot is always represented through the same number of support points. This allows a comparison that isolates the effects of the refinement of the upper grid itself on the resulting formulation sizes. However, the sizes of the constructed LPs vary significantly, as seen in Table \ref{SizeDot}, due to the changing amount of repetition of weighted means when producing $S$. For LP (\ref{baryLP}) and LP (\ref{barymodLP}), the largest programs arise for a three-times-finer grid -- this is not surprising, as refinements by $2$ or $4$ naturally lead to more repetition. LP (\ref{LPw}) does not account for this repetition, so its size remains constant.

As expected,  LP (\ref{LPhybrid}) always provides an improvement over LP (\ref{baryLP}) and LP (\ref{barymodLP}) in model size. Essentially, it treats the few support points of the dot like data in general position through the introduction of $w$-variables. As we refine the grid, the size reduction for LP (\ref{LPhybrid}) over LP (\ref{barymodLP}) becomes more and more significant. Note that much of this reduction actually lies in the number of constraints.

\begin{table}[t]
\begin{center}
\begin{tabular}{|c|c|c|c|c|c|c|c|c|}\hline
Grid Refinement & \multicolumn{2}{c|}{LP (\ref{baryLP})}& \multicolumn{2}{c|}{LP (\ref{barymodLP})}& \multicolumn{2}{c|}{LP (\ref{LPw}) }& \multicolumn{2}{c|}{LP (\ref{LPhybrid})} \\ \hline
& Constr & Vars& Constr & Vars& Constr & Vars& Constr & Vars \\ \hline
1 & 7,067 & 370,008 & 7,067 & 194,632 &215 & 8,258,112 &5,823 & 192,901 \\ \hline
2  & 20,243 & 1,081,512 & 20,243 & 476,316 & 215 & 8,258,112 & 14,883 & 469,330 \\ \hline
3& 55,971 & 3,010,824& 55,971 & 795,175& 215 & 8,258,112&17,427 & 665,781  \\ \hline
4 &  48,623 & 2,614,032 & 48,623 & 794,117 & 215 & 8,258,112& 17,027 & 686,133\\ \hline
\end{tabular}
\end{center}
\caption{LP sizes for each formulation as the upper grid is refined.  LP (\ref{LPw}) is always the same, very large size regardless of refinement. As we refine the upper grid, the reduction in size using LP (\ref{LPhybrid}) over LP (\ref{barymodLP}) becomes more significant.}\label{SizeDot}
\end{table}

Setup, solution, and total running times are shown in Table \ref{SolveDot}.  In this scenario, the setup times are dominated by the solution times. As the upper grid is refined, LP (\ref{baryLP}) scales quite poorly in solution times, and thus also in total time. LP (\ref{barymodLP}) performs better, but is still vastly outperformed by LP (\ref{LPhybrid}): The combination of slightly fewer variables and signficiantly fewer constraints leads to much better solution times, especially for the finest upper grids. 

Since LP (\ref{LPw}) does not account for repetition, its size is constant through the experiments and its solution times remain constant. This essentially provides an upper bound on the running time for this number of measures regardless of refinement level; LP (\ref{LPhybrid}) always performs better, as it can take advantage of the still-present repetition, in particular in the base of the letters. Compared to LP (\ref{baryLP}), we ultimately see as much as a $99\%$ improvement in total running time using LP (\ref{LPhybrid}). Further, the total running time for LP (\ref{LPhybrid}) is approximately a $66\%$ improvement over LP (\ref{barymodLP}) and LP (\ref{LPw}). 

\begin{table}[t]
\begin{center}
\resizebox{\columnwidth}{!}{
\begin{tabular}{|c|c|c|c|c|c|c|c|c|c|c|c|c|}\hline
 \multicolumn{13}{|c|}{Time in Seconds} \\ \hline
{Dot Grid} & \multicolumn{3}{c|}{LP (\ref{baryLP})} & \multicolumn{3}{c|}{LP (\ref{barymodLP})}  & \multicolumn{3}{c|}{LP (\ref{LPw})} & \multicolumn{3}{c|}{LP (\ref{LPhybrid})} \\ \hline
  &Setup & Solve & Total &Setup & Solve & Total &Setup & Solve & Total&Setup  & Solve & Total \\ \hline 
1 &9.06 & 29.11 & 38.19 & 10.65 & 6.90 &   \textbf{17.59} & 19.28 & 419.87&  443.85 &  13.15 & \textbf{6.35}&  19.55   \\ \hline
2 & 18.50 &1056.37 &1075.22& 20.82 &  69.24& 90.18  & 19.31 &415.63&438.98 & 26.81 & \textbf{57.87} &\textbf{84.59}\\ \hline
3& 33.74 & 14111.3 & 14146& 31.63 & 461.32& 493.15 & 19.30 & 424.76 & 448.82  & 46.59 & \textbf{92.65}&\textbf{139.32} \\ \hline
4 &34.45 & 10643.3 & 10687.6& 33.94 & 462.90&495.22  & 19.07 &437.11& 461.10  & 46.24 &\textbf{118.35} & \textbf{164.78}\\ \hline
\end{tabular}
}
\end{center}
\caption{Setup, solution, and total running times for each formulation as the upper grid is refined. LP (\ref{LPhybrid}) significantly outperforms the others, scaling far better with the refinement. Fastest solution and total times are displayed in bold.}\label{SolveDot}
\end{table}

\section{Concluding Remarks} \label{sec:conc}

In this paper, we devised new, smaller linear programs for exact solutions to the Discrete Barycenter Problem. Each formulation is an improvement over the previously known linear program. Each is the best choice for different types of data, as seen in theoretical analysis and computational experiments.

With an abundance of applications for the barycenter problem, these better, exact models may play an important role in many current research questions. In addition to the inherent benefit of finding an exact solution for larger problem instances, they also can be used to improve LP-based approximation methods, and are highly beneficial for evaluating the quality of state-of-the-art heuristic approaches. 

There are a couple of natural questions arising from this work. Our computational experiments are designed to highlight the relative performance of each linear program. For practical computations, we still see significant potential for improvement in a somewhat different type of study. For example, we found that Gurobi's barrier method makes particularly good use of multiple cores when solving these problems. A more technical analysis of the behavior of the programs with respect to different solvers may be of interest to optimize total running times. The similarity to classical transportation problems also suggests that the problem might be approachable through parallelized algorithms. Further, the large number of variables and relatively low number of constraints in the models, especially for LP (\ref{LPw}), makes a column generation approach promising. Finally, a formal proof of hardness of the Discrete Barycenter Problem is notably missing in the literature -- in this paper, we only proved that the setup of the LP formulations is hard. 
\section*{Acknowledgments}

We thank Natalia Villegas Franco for the implementation of a visualization basis for the computations on Denver crime data depicted in Figure \ref{fig:murders}. We gratefully acknowledge support through the Collaboration Grant for Mathematicians  {\em Polyhedral Theory in Data Analytics} of the Simons Foundation.

\section*{Biographies}

Steffen Borgwardt works in the Department of Mathematical and Statistical Sciences at the University of Colorado Denver. His research lies on the intersection of combinatorial optimization, polyhedral theory, and linear programming. He is a lifetime Humboldt fellow and received the EURO Excellence in Practice Award 2013 for his work on optimization in land consolidation (jointly with Dr. Andreas Brieden, Dr. Peter Gritzmann).

Stephan Patterson is a Ph.D. student at the University of Colorado Denver. He has a background in computational mathematics and optimization, and holds an M.S. in Applied Mathematics, with an emphasis in Numerical Analysis. 

The ideas presented in this paper emerged during ongoing work on a divide-and-conquer approach that would provide an approximate solution to the discrete barycenter problem. Surprising results of computational experiments for grid-structured data highlighted a path to the improvements on the state-of-the-art for the exact computation of discrete barycenters found in this paper.

\bibliography{barycenters_literature}
\bibliographystyle{plain}

\end{document}